\newtheorem{thm}{Theorem}[section]
\newtheorem{cor}[thm]{Corollary}
\theoremstyle{definition}
\theoremstyle{remark}
\newtheorem{rem}{Remark}[section]
\begin{document}
\title[Traveling waves for strongly coupled competitive systems]{Spatial segregation limit of traveling wave solutions for a fully nonlinear strongly coupled competitive system}
\author{Léo Girardin}
\email{leo.girardin@math.cnrs.fr}
\address[L.G.]{Institut Camille Jordan, Universit\'{e} Claude Bernard Lyon-1, 43 boulevard du 11 novembre 1918, 69622 Villeurbanne Cedex, France}
\thanks{This work was supported by a public grant as part of the Investissement d'avenir project,
reference ANR-11-LABX-0056-LMH, LabEx LMH}

\author{Danielle Hilhorst}
\email{danielle.hilhorst@math.u-psud.fr}
\address[D.H.]{Universit\'{e} Paris-Saclay, CNRS, Laboratoire de Math\'{e}matiques d'Orsay, 91405 Orsay Cedex, France}

\begin{abstract}
    The paper is concerned with a singular limit for the bistable traveling wave problem in a very large class 
    of two-species fully nonlinear parabolic systems with competitive reaction terms. Assuming existence of traveling
    waves and enough compactness, we derive and characterize the limiting problem. The assumptions and results
    are discussed in detail. The free boundary problem obtained at the limit is specified for important applications.
\end{abstract}

\keywords{Lotka--Volterra, traveling wave, bistability, nonlinear diffusion, nonlinear advection.}
\subjclass[2010]{35K40, 35K57, 92D25.}

\maketitle
\section{Introduction}

We investigate the reaction--diffusion system
\begin{equation}\label{eq:RD_system}
    \begin{cases}
	\partial_t u = & \partial_{x}\left( d_{1,1}\left( u,v \right)\partial_{x} u+d_{1,2}\left( u,v \right)\partial_{x} v \right)\\
	& + h_{1,1}\left( u,v \right)\partial_{x} u+h_{1,2}\left( u,v \right)\partial_{x} v \\
	& +ug_1(u,v)-k\omega(u,v), \\
	\partial_t v = & \partial_{x}\left( d_{2,1}\left( u,v \right)\partial_{x} u+d_{2,2}\left( u,v \right)\partial_{x} v \right)\\
	&+ h_{2,1}\left( u,v \right)\partial_{x} u+h_{2,2}\left( u,v \right)\partial_{x} v \\
	& +vg_2(u,v)-\alpha k\omega(u,v),
    \end{cases}
\end{equation}
where $t$ is a real time variable, $x$ is a one-dimensional real space variable, $u(t,x)$ and $v(t,x)$ are two population densities,
$\mathbf{D}=(d_{i,j})_{1\leq i,j\leq 2}$ is a matrix of self- and cross-diffusion rates \footnote{Although the word ``rate'' might
be misleading, in the whole paper, rates are not necessarily constant and are in general functions of $(u,v)$.},
$\mathbf{H}=(h_{i,j})_{1\leq i,j\leq 2}$ is a matrix of self- and cross-advection rates,
$\mathbf{g}=(g_i)_{1\leq i\leq 2}$ is a vector of growth rates per capita accounting for a saturation effect, 
$\omega$ is an interpopulation competition operator and
$k>0$ and $\alpha k>0$ are constant interpopulation competition rates exerted respectively
by $v$ on $u$ and by $u$ on $v$.

The three main examples of such systems that we have in mind are the classical Lotka--Volterra
competition--diffusion system,
\begin{equation}\label{eq:LV_system}
    \begin{cases}
	\partial_t u = \partial_{xx} u +u(1-u)-kuv, \\
	\partial_t v = d\partial_{xx} v+rv(1-v)-\alpha kuv,
    \end{cases}
\end{equation}
the Potts--Petrovskii competition--diffusion--cross-taxis system \cite{Potts_Petrovskii},
\begin{equation}\label{eq:PP_system}
    \begin{cases}
	\partial_t u = \partial_{xx}u -\gamma_1\partial_{x}\left( u\partial_x v \right) + u(1-u) -kuv, \\
	\partial_t v = d\partial_{xx}v -\gamma_2\partial_{x}\left( v\partial_x u \right) + rv(1-v) -\alpha kuv,
    \end{cases}
\end{equation}
and the Shigesada--Kawasaki--Teramoto competition--cross-diffusion system \cite{SKT_79},
\begin{equation}\label{eq:SKT_system}
    \begin{cases}
	\partial_t u = \partial_{xx}\left( u(d_1+a_{1,1}u+a_{1,2}v) \right) + u(1-u) -kuv, \\
	\partial_t v = \partial_{xx}\left( v(d_2+a_{2,1}u+a_{2,2}v) \right) + rv(1-v) -\alpha kuv.
    \end{cases}
\end{equation}
In the above systems, the various parameters are all positive constants, except $\gamma_1$ and 
$\gamma_2$ that might be of any sign.

Our abstract framework is justified by the fact that we want to deal with all these
systems simultaneously. We also want to encompass generalizations of \eqref{eq:SKT_system}
of the form
\begin{equation*}
    \begin{cases}
	\partial_t u = \partial_{xx}\left( u(d_1+a_{1,1}u^{\beta_{1,1}}+a_{1,2}v^{\beta_{1,2}}) \right) + u(1-u) -kuv, \\
	\partial_t v = \partial_{xx}\left( v(d_2+a_{2,1}u^{\beta_{2,1}}+a_{2,2}v^{\beta_{2,2}}) \right) + rv(1-v) -\alpha kuv,
    \end{cases}
\end{equation*}
with arbitrary positive exponents $\left( \beta_{i,j} \right)_{1\leq i,j\leq 2}$,
that have been considered in the literature \cite{Desvillettes_et_al_2015}.
Moreover, we want to be able to replace the logistic growth terms $u(1-u)$ and 
$rv(1-v)$ above by more
general monostable reaction terms, for instance with a weak Allee effect (\textit{e.g.}, 
$u(u+\theta)(1-u)$ with $\theta\in[0,1)$).

More precisely, we are interested in the singular limit $k\to+\infty$ of the associated traveling wave system
satisfied by entire solutions of the form $(u,v):(t,x)\mapsto\left( \phi,\psi \right)(x-ct)$, where $(\phi,\psi)$ is
the wave profile and $c$ is the wave speed. From now on, we denote $\xi=x-ct$ the wave variable and
$\boldsymbol{\Phi}=(\phi,\psi)$ the wave profile vector. The traveling wave system reads, in vector form,
\begin{equation}\label{eq:TW_system}
    -(\mathbf{D}(\boldsymbol{\Phi})\boldsymbol{\Phi}')'-(\mathbf{H}(\boldsymbol{\Phi})+c\mathbf{I})\boldsymbol{\Phi}' = 
    \boldsymbol{\Phi}\circ\mathbf{g}(\boldsymbol{\Phi}) - k\omega(\boldsymbol{\Phi})
    \begin{pmatrix}
	1 \\
	\alpha
    \end{pmatrix}.
\end{equation}
The notation $\circ$ above stands for the component-by-component product of two vectors, namely 
the so-called Hadamard product.

The singular limit $k\to+\infty$ of the traveling wave system for \eqref{eq:LV_system} was studied by the first author 
and G. Nadin in \cite{Girardin_Nadin_2015}. There, it was proved that the singular limit of 
the solution $(\phi_k,\psi_k,c_k)$ of
\begin{equation}\label{eq:TW_LV_system}
    \begin{cases}
	-\phi_k''-c_k\phi_k' =\phi_k(1-\phi_k)-k\phi_k\psi_k, \\
	-d\psi_k''-c_k\psi_k'=r\psi_k(1-\psi_k)-\alpha k\phi_k\psi_k,\\
	\lim_{-\infty}\boldsymbol{\Phi}_k=(1,0), \\
	\lim_{+\infty}\boldsymbol{\Phi}_k=(0,1), \\
	\phi_k'<0, \\
	\psi_k'>0,
    \end{cases}
\end{equation}
is 
\begin{equation*}
    (\phi_\infty,\psi_\infty,c_\infty)=\left(\alpha^{-1}z^+,z^-,c\right),
\end{equation*}
where $z^+=\max(z,0)$ and $z^-=\max(-z,0)$ are respectively the positive and negative part
of the function $z$ and where $(z,c)$ is the unique (up to shifts of $z$) solution of
\begin{equation*}
    \begin{cases}
	\int-((\mathbf{1}_{z>0}+d\mathbf{1}_{z<0})z')'\chi-c\int z'\chi=\int(z^+(1-\frac{z}{\alpha})-rz^-(1+z))\chi\quad\text{for all }\chi\in L^1(\mathbb{R}),\\
	\lim_{-\infty}z=\alpha, \\
	\lim_{+\infty}z=-1,\\
	z'\leq 0.
    \end{cases}
\end{equation*}
The integral equality is the equation 
$-((\mathbf{1}_{z>0}+d\mathbf{1}_{z<0})z')'-cz'=z^+(1-\frac{z}{\alpha})-rz^-(1+z)$
tested against test functions in $L^1(\mathbb{R})$ and where the divergence-form second order 
term, the first order term and the zeroth order term are all in $L^\infty(\mathbb{R})$. Consequently,
$z$ is smooth and a classical solution away from the point $z^{-1}(0)$ and Lipschitz-continuous 
at $z^{-1}(0)$.

Roughly speaking, the preceding result follows from the combination of $\phi_k\psi_k\to0$ 
(spatial segregation) and an equation that does not depend explicitly on $k$:
\begin{equation*}
	-\alpha\phi_k''+d\psi_k''-c_k\left( \alpha\phi_k'-\psi_k' \right)=\alpha\phi_k(1-\phi_k)-r\psi_k\left( 1-\psi_k \right).
\end{equation*}

In the present work, we will show that the singular limit $k\to+\infty$ of \eqref{eq:TW_system} is 
similarly given, under reasonable assumptions, by the unique (up to shifts) solution of
\begin{equation}\label{eq:singular_limit}
    \begin{cases}
	\int-(d(z)z')'\chi-\int(c+h(z))z'\chi=\int g(z)\chi\quad\text{for all }\chi\in L^1(\mathbb{R}),\\
	\lim_{-\infty}z=\alpha, \\
	\lim_{+\infty}z=-1,\\
	z'\leq 0.
    \end{cases}
\end{equation}
where 
\begin{equation}\label{def:d}
    d:z\mapsto d_{1,1}\left( \frac{z}{\alpha},0 \right)\mathbf{1}_{z>0}+d_{2,2}\left( 0,-z \right)\mathbf{1}_{z<0},   
\end{equation}
\begin{equation}\label{def:h}
    h:z\mapsto h_{1,1}\left( \frac{z}{\alpha},0 \right)\mathbf{1}_{z>0}+h_{2,2}\left( 0,-z \right)\mathbf{1}_{z<0},
\end{equation}
\begin{equation}\label{def:g}
    g:z\mapsto z^+ g_1\left(\frac{z}{\alpha},0\right)-z^- g_2(0,-z).
\end{equation}

\subsection{Minimal speeds of the underlying monostable problems}
In order to state more precisely the assumptions and the results, we need to introduce
$c_\star^+\in\mathbb{R}$ the minimal speed of classical traveling wave solutions of
\begin{equation*}
    \begin{cases}
	-(d(z)z')'-(c+h(z))z'=g(z),\\
	\lim_{-\infty}z=\alpha, \\
	\lim_{+\infty}z=0,\\
	z'\leq 0,
    \end{cases}
\end{equation*}
and $c_\star^-\in\mathbb{R}$ the minimal speed of classical traveling wave solutions of
\begin{equation*}
    \begin{cases}
	-(d(z)z')'-(c+h(z))z'=g(z),\\
	\lim_{-\infty}z=-1, \\
	\lim_{+\infty}z=0,\\
	z'\geq 0.
    \end{cases}
\end{equation*}
Both are well-defined provided $d$, $h$ and $g$ are continuous away from $0$ 
and $d$ is positive away from $0$ \cite{Malaguti_Marcelli_2002}.

\subsection{Standing assumptions}
The standing assumptions follow. 

\begin{enumerate}[label=$({\mathsf{A}}_{\arabic*})$]
    \item \label{ass:regularity} $\mathbf{D}\in \mathscr{C}^1([0,1]^2)$, $\mathbf{H}\in \mathscr{C}([0,1]^2)$, $\mathbf{g}\in \mathscr{C}([0,1]^2)$,
	$\omega\in\mathscr{C}([0,1]^2)$.
    \item \label{ass:ellipticity} $d_{1,1}$ and $d_{2,2}$ are positive in $[0,1]^2$.
    \item \label{ass:no_coupling_at_zero} $d_{1,2}(0,\bullet)$, $d_{2,1}(\bullet,0)$, $h_{1,2}(0,\bullet)$ and $h_{2,1}(\bullet,0)$ are identically zero in $[0,1]$.
    \item \label{ass:monostability} $g_1(\bullet,0)$ and $g_2(0,\bullet)$ are positive in $(0,1)$ and $g_1(1,0)=g_2(0,1)=0$.
    \item \label{ass:symmetric_competition} $\omega$ is positive in $(0,1]^2$ and identically zero on $\{0\}\times[0,1]\cup[0,1]\times\{0\}$.
    \item \label{ass:ordered_monostable_minimal_speeds} $c_\star^+>-c_\star^-$.
    \item \label{ass:existence_waves} There exists $k^\star>0$ such that, for all $k>k^\star$, 
	the system \eqref{eq:TW_system} admits a classical solution $(\boldsymbol{\Phi}_k,c_k)$ with nonincreasing
	$\phi_k$, nondecreasing $\psi_k$, $\lim_{-\infty}\boldsymbol{\Phi}_k=(1,0)$, $\lim_{+\infty}\boldsymbol{\Phi}_k=(0,1)$.
    \item \label{ass:bounded_speeds} Any family of wave speeds $(c_k)_{k>k^\star}$ is bounded in $\mathbb{R}$.
    \item \label{ass:bounded_profiles} Any family of wave profiles $(\boldsymbol{\Phi}_k)_{k>k^\star}$ is locally relatively compact in a H\"{o}lder space.
\end{enumerate}

Note that the systems \eqref{eq:LV_system}, \eqref{eq:PP_system}, \eqref{eq:SKT_system} all satisfy
directly the assumptions \ref{ass:regularity}--\ref{ass:symmetric_competition}. 
As will be explained in the discussion (cf. Section \ref{sec:ordered_monostable_minimal_speeds}),
they also satisfy \ref{ass:ordered_monostable_minimal_speeds}.
It will be explained in the discussion (cf. Section \ref{sec:bounded_profiles}) that
\ref{ass:bounded_profiles} is satisfied for \eqref{eq:LV_system}, \eqref{eq:SKT_system} and \eqref{eq:PP_system} if
$\min\left( \gamma_1,\gamma_2 \right)\geq 0$ (attractive--attractive case). 
Finally, \ref{ass:existence_waves} and \ref{ass:bounded_speeds} are satisfied for
\eqref{eq:LV_system} \cite{Gardner_1982,Kan_on_1995} but are, to the best of our knowledge, 
important open problems for the strongly coupled systems
\eqref{eq:PP_system} and \eqref{eq:SKT_system} that have been solved only in special perturbative 
cases
(cf. Sections \ref{sec:existence_waves} and \ref{sec:bounded_speeds}). Solving these problems away from any such
perturbative regime is way outside the scope of this paper.

\subsection{Statement of the main result}
The main result follows. The limiting profiles and their basic properties -- spatial segregation and free boundary relation -- 
are illustrated on Figure \ref{fig:profiles}.

\begin{thm}\label{thm:main}
    Assume \ref{ass:regularity}--\ref{ass:bounded_profiles}.

    Then there exists $c_\infty\in\mathbb{R}$ and 
    $\boldsymbol{\Phi}_\infty\in \mathscr{C}^2(\mathbb{R}\backslash\left\{ 0 \right\})\cap W^{1,\infty}(\mathbb{R})$ such that,
    up to shifting $(\boldsymbol{\Phi}_k)_{k>k^\star}$,
    \begin{equation*}
	\lim_{k\to+\infty}\left( \left|c_k-c_\infty\right|
	+\left\|\boldsymbol{\Phi}_k-\boldsymbol{\Phi}_\infty\right\|_{L^\infty(\mathbb{R})}
	+\left\|\boldsymbol{\Phi}_k'-\boldsymbol{\Phi}_\infty'\right\|_{L^1(\mathbb{R})} \right) =0.
    \end{equation*}

    The limit satisfies $\phi_\infty \psi_\infty = 0$ and 
    the pair $(\alpha\phi_\infty-\psi_\infty,c_\infty)$ is the unique solution $(z,c)$ of
    \eqref{eq:singular_limit} satisfying $z(0)=0$.

    Consequently, 
    \begin{enumerate}
	\item the profiles $\phi_\infty$ and $\psi_\infty$ are respectively decreasing in $(-\infty,0)$ with
	    limit $1$ at $-\infty$ and increasing in $(0,+\infty)$ with limit $1$ at $+\infty$;
	\item the free boundary relation reads 
	    \begin{equation*}
		-d_{1,1}(0,0)\alpha\lim_{\xi\to 0^-}\phi_\infty'(\xi)=d_{2,2}(0,0)\lim_{\xi\to 0^+}\psi_\infty'(\xi);
	    \end{equation*}
	\item the speed $c_\infty$ satisfies $-c_\star^-<c_\infty<c_\star^+$;
	\item if $h_{1,1}=h_{2,2}=h_0\in\mathbb{R}$, then
	    \begin{equation}\label{eq:sign_speed}
		\operatorname{sign}\left(c_\infty+h_0\right)=\operatorname{sign}\left(\alpha^2 -\frac{\int_{0}^1 zd_{2,2}\left( 0,z \right)g_2\left( 0,z \right)\textup{d}z}{\int_{0}^1 zd_{1,1}\left( z,0 \right)g_1\left( z,0 \right)\textup{d}z}\right).
\end{equation}
    \end{enumerate}
\end{thm}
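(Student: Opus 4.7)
\medskip

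\noindent\textbf{Proof proposal.} The plan is to run a standard singular-limit argument: extract a subsequential limit, prove spatial segregation, pass to the limit in a $k$-independent combination of the two equations, identify the limit as the (unique) solution of \eqref{eq:singular_limit}, and finally upgrade subsequential convergence to full convergence and derive the four additional properties.

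First, by \ref{ass:bounded_speeds} one extracts $c_{k_n}\to c_\infty$. The profiles $(\phi_k,\psi_k)$ are monotone, bounded in $[0,1]$, and by \ref{ass:bounded_profiles} locally precompact in a H\"older space; after normalizing the shifts (for instance by imposing $\alpha\phi_k(0)-\psi_k(0)=0$, which is possible as $\alpha\phi_k-\psi_k$ is strictly decreasing from $\alpha$ to $-1$), one obtains a subsequence such that $\boldsymbol{\Phi}_{k_n}\to\boldsymbol{\Phi}_\infty$ locally uniformly with $\boldsymbol{\Phi}_\infty$ monotone, bounded, taking limits in $\{(1,0),(0,1)\}$ at $\pm\infty$ by standard squeezing. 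The crucial decoupled equation is obtained by multiplying the first equation of \eqref{eq:TW_system} by $\alpha$ and subtracting the second: the $k\omega$ terms cancel, yielding a $k$-independent equation for $z_k:=\alpha\phi_k-\psi_k$ of the schematic form $-(\mathbf{A}_k\boldsymbol{\Phi}_k')'-(\mathbf{B}_k+c_k\mathbf{J})\boldsymbol{\Phi}_k'=\alpha\phi_k g_1(\boldsymbol{\Phi}_k)-\psi_k g_2(\boldsymbol{\Phi}_k)$. Since each individual equation then gives $k\omega(\boldsymbol{\Phi}_k)$ in terms of quantities bounded independently of $k$ (locally in $L^1$ against smooth test functions), one deduces $\omega(\boldsymbol{\Phi}_\infty)\equiv 0$, hence by \ref{ass:symmetric_competition} the segregation $\phi_\infty\psi_\infty\equiv 0$.

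On the support of $\phi_\infty$ one has $\psi_\infty=0$, so \ref{ass:no_coupling_at_zero} kills all cross terms and the decoupled equation reduces there to $-(d_{1,1}(\phi_\infty,0)\phi_\infty')'-(h_{1,1}(\phi_\infty,0)+c_\infty)\phi_\infty'=\phi_\infty g_1(\phi_\infty,0)$, and symmetrically on the support of $\psi_\infty$. Setting $z_\infty:=\alpha\phi_\infty-\psi_\infty$, one reads off $\phi_\infty=z_\infty^+/\alpha$ and $\psi_\infty=z_\infty^-$, and the two half-line equations merge into the distributional equation of \eqref{eq:singular_limit} with $d,h,g$ given by \eqref{def:d}--\eqref{def:g}. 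Passing the decoupled equation to the distributional limit against $L^1$ test functions is straightforward given the $W^{1,\infty}$ bounds inherited from \ref{ass:bounded_profiles} (after verifying that $\boldsymbol{\Phi}_k'\to\boldsymbol{\Phi}_\infty'$ in a suitable integrable sense, which uses the monotonicity of the profiles to control the $L^1$ norm of the derivative by the total variation $1$). The free boundary relation then pops out by writing $d(z_\infty)z_\infty'\in W^{1,\infty}_{\mathrm{loc}}$, so it is continuous across $\xi=0$; evaluating left- and right-limits gives exactly the stated jump condition.

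To conclude, I would invoke uniqueness up to shift of the bistable traveling front for the scalar problem \eqref{eq:singular_limit} (this must be established separately; it is the analogue of the scalar bistable result with discontinuous piecewise-smooth coefficients, and can be obtained by sliding once one knows that the speed $c_\infty$ lies strictly between $-c_\star^-$ and $c_\star^+$). The strict inequalities $-c_\star^-<c_\infty<c_\star^+$ come from a standard comparison: if, say, $c_\infty\geq c_\star^+$, one could use the minimal monostable front with speed $c_\star^+$ connecting $\alpha$ to $0$ on the right half-line to obtain a contradiction with the prescribed limit $-1$ at $+\infty$. Uniqueness of the normalized limit forces convergence of the whole family, with the stated norms by dominated convergence (monotonicity gives $L^1$-convergence of the derivatives from $L^\infty$ convergence of the profiles). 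Finally, in the case $h_{1,1}=h_{2,2}=h_0$ the speed formula \eqref{eq:sign_speed} follows from multiplying the limit equation by $d(z_\infty)z_\infty'$ and integrating over $\mathbb{R}$: the divergence-form term integrates to $-\tfrac12[(d(z_\infty)z_\infty')^2]_{-\infty}^{+\infty}=0$, and a change of variables in the right-hand side yields
\begin{equation*}
    (c_\infty+h_0)\int_{\mathbb{R}} d(z_\infty)(z_\infty')^2\,\textup{d}\xi=\alpha^2\int_0^1 z d_{1,1}(z,0)g_1(z,0)\,\textup{d}z-\int_0^1 z d_{2,2}(0,z)g_2(0,z)\,\textup{d}z,
\end{equation*}
whose sign is exactly \eqref{eq:sign_speed}.

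The main obstacle I expect is the rigorous identification of the limit as a distributional solution of \eqref{eq:singular_limit} across the free boundary $\{\xi=0\}$, together with the separate uniqueness-up-to-shift statement for \eqref{eq:singular_limit}; these two ingredients together are what turn subsequential compactness into full convergence, and they require carefully handling the discontinuous coefficients $d,h$ at $z=0$ while exploiting the strict monotonicity $z_\infty'<0$ away from the transition point to apply a sliding argument.
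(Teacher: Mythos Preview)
Your overall architecture coincides with the paper's: extract a subsequential limit, prove segregation via the distributional bound on $k\omega(\boldsymbol{\Phi}_k)$, pass to the limit in the $k$-free linear combination $\alpha\times(\text{eq.~1})-(\text{eq.~2})$, identify the scalar bistable problem, and invoke a separate uniqueness result (the paper isolates this as Theorem~\ref{thm:app}). The energy identity for \eqref{eq:sign_speed} is exactly what the paper does.

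There is, however, a genuine gap where you write ``taking limits in $\{(1,0),(0,1)\}$ at $\pm\infty$ by standard squeezing.'' This is \emph{not} standard: a priori the locally uniform limit $\boldsymbol{\Phi}_\infty$ could be degenerate (one or both components identically zero, or with limits at $\pm\infty$ strictly inside $(0,1)$). Your normalization $\alpha\phi_k(0)-\psi_k(0)=0$ only forces $\phi_\infty(0)=\psi_\infty(0)=0$ after segregation, which gives no information against $\boldsymbol{\Phi}_\infty\equiv(0,0)$ or against a half-trivial limit. The paper handles this carefully: it normalizes by $\phi_k(0)=\tfrac12$ or $\psi_k(0)=\tfrac12$ \emph{depending on whether $c_\infty\le\tfrac12(c_\star^+-c_\star^-)$ or not}, so that one component is nontrivial by construction; it then uses elliptic regularity and \ref{ass:monostability} to get the limit $1$ at the corresponding infinity, and crucially invokes the monostable minimal-speed theory of Malaguti--Marcelli to rule out $\xi^+=+\infty$ (otherwise $\phi_\infty$ would be a monostable front with speed $c_\infty<c_\star^+$, a contradiction). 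Only then, via an integration-by-parts identity at the free boundary, does it conclude that the \emph{other} component is also nontrivial and that the dead zone $[\xi^+,\xi^-]$ collapses to a point. You need this chain of arguments; ``squeezing'' does not supply it.

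A smaller but real issue: \ref{ass:bounded_profiles} is only local H\"older compactness, not $W^{1,\infty}$ bounds. The paper never assumes uniform Lipschitz control on $\boldsymbol{\Phi}_k$; it passes to the limit using weak-$\star$ convergence of the signed measures $\phi_k'\,\textup{d}\xi$, $\psi_k'\,\textup{d}\xi$ (bounded in total variation by monotonicity) and handles the cross-terms $d_{1,2}(\boldsymbol{\Phi}_\infty)\,\textup{d}\psi_\infty$, etc., via \ref{ass:no_coupling_at_zero} and support considerations. Your sentence ``$W^{1,\infty}$ bounds inherited from \ref{ass:bounded_profiles}'' should be replaced by this measure-theoretic argument.
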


Below, we specify the theorem for the three main examples. It turns out that 
\eqref{eq:LV_system} and \eqref{eq:PP_system} have the same singular limit whereas
\eqref{eq:SKT_system} has a different limit.

\begin{figure}
    \centering
    \resizebox{\linewidth}{!}{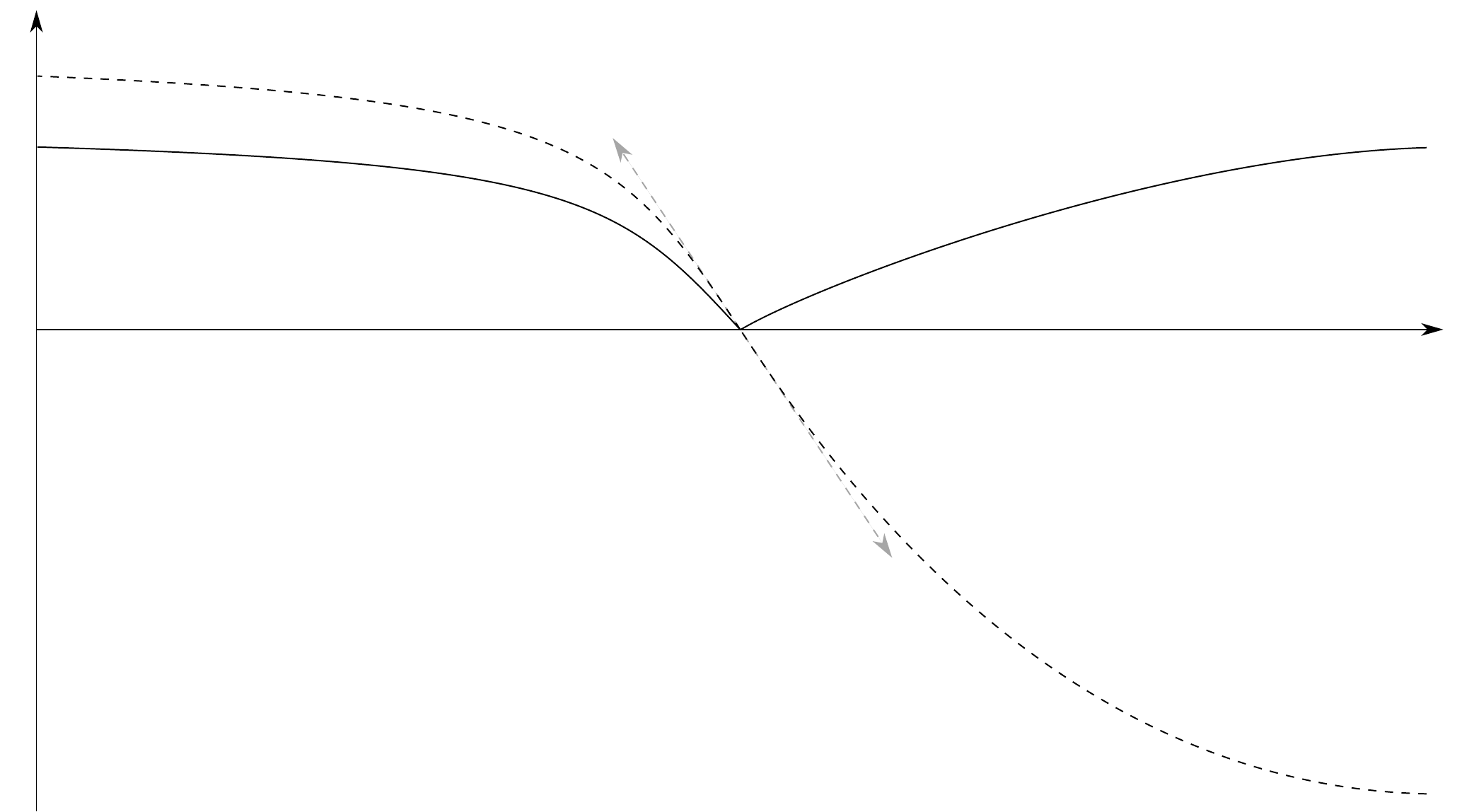}
    \caption{The segregated limiting profiles in the moving frame $\xi=x-c_\infty t$}
    \label{fig:profiles}
\end{figure}

\begin{cor}[``Disunity is strength'' \cite{Girardin_Nadin_2015}]\label{cor:LV_DIS}
    Assume \eqref{eq:RD_system} has the form \eqref{eq:LV_system}.

    Then there exists $c_\infty\in\mathbb{R}$ and 
    $\boldsymbol{\Phi}_\infty\in \mathscr{C}^2(\mathbb{R}\backslash\left\{ 0 \right\})\cap W^{1,\infty}(\mathbb{R})$ such that,
    up to shifting $(\boldsymbol{\Phi}_k)_{k>k^\star}$,
    \begin{equation*}
	\lim_{k\to+\infty}\left( \left|c_k-c_\infty\right|
	+\left\|\boldsymbol{\Phi}_k-\boldsymbol{\Phi}_\infty\right\|_{L^\infty(\mathbb{R})}
	+\left\|\boldsymbol{\Phi}_k'-\boldsymbol{\Phi}_\infty'\right\|_{L^1(\mathbb{R})} \right) =0.
    \end{equation*}

    The limit satisfies $\phi_\infty \psi_\infty = 0$ and 
    the pair $(\alpha\phi_\infty-\psi_\infty,c_\infty)$ is the unique solution $(z,c)$ of
    \begin{equation*}
        \begin{cases}
    	\int-((\mathbf{1}_{z>0}+d\mathbf{1}_{z<0})z')'\chi -c\int z'\chi =\int(z^+(1-\frac{z}{\alpha})-rz^-(1+z))\chi \\
	\quad\text{for all }\chi\in L^1(\mathbb{R}),\\
    	\lim_{-\infty}z=\alpha, \\
    	\lim_{+\infty}z=-1,\\
	z'\leq 0,\\
	z(0)=0.
        \end{cases}
    \end{equation*}

    Consequently, 
    \begin{enumerate}
	\item the profiles $\phi_\infty$ and $\psi_\infty$ are respectively decreasing in $(-\infty,0)$ with
	    limit $1$ at $-\infty$ and increasing in $(0,+\infty)$ with limit $1$ at $+\infty$;
	\item the free boundary relation reads 
	    \begin{equation*}
		-\alpha\lim_{\xi\to 0^-}\phi_\infty'(\xi)=d\lim_{\xi\to 0^+}\psi_\infty'(\xi);
	    \end{equation*}
	\item the speed $c_\infty$ satisfies $-2\sqrt{rd}<c_\infty<2$;
	\item $\operatorname{sign}\left( c_\infty \right)=\operatorname{sign}\left( \alpha^2-rd \right)$.
    \end{enumerate}
\end{cor}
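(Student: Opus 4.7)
The plan is to deduce Corollary \ref{cor:LV_DIS} from Theorem \ref{thm:main} by checking that \eqref{eq:LV_system} satisfies the abstract hypotheses, computing the specialized coefficients entering \eqref{eq:singular_limit}, and then translating the four numbered conclusions.

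First I would verify the standing assumptions. Reading off \eqref{eq:LV_system}, the matrix $\mathbf{D}$ is constant and diagonal with $d_{1,1}\equiv 1$ and $d_{2,2}\equiv d$, $\mathbf{H}\equiv 0$, $g_1(u,v)=1-u$, $g_2(u,v)=r(1-v)$, and $\omega(u,v)=uv$. Then \ref{ass:regularity}--\ref{ass:symmetric_competition} hold by inspection. The two monostable problems reduce, after the substitution $w=-z$ in the second case, to scalar Fisher--KPP equations with minimal speeds $c_\star^+=2$ and $c_\star^-=2\sqrt{rd}$; both being positive, \ref{ass:ordered_monostable_minimal_speeds} is immediate. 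Hypotheses \ref{ass:existence_waves} and \ref{ass:bounded_speeds} are precisely the classical theorems of Gardner \cite{Gardner_1982} and Kan-on \cite{Kan_on_1995}, and \ref{ass:bounded_profiles} follows from the uniform bound $0\leq\phi_k,\psi_k\leq 1$ combined with interior Schauder estimates for the two semilinear scalar equations with constant linear diffusion.

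Second, plugging the coefficients into \eqref{def:d}--\eqref{def:g} yields $d(z)=\mathbf{1}_{z>0}+d\,\mathbf{1}_{z<0}$, $h\equiv 0$, and $g(z)=z^+(1-z/\alpha)-rz^-(1+z)$, so that \eqref{eq:singular_limit} becomes verbatim the weak formulation stated in the corollary; Theorem \ref{thm:main} then delivers the convergence statement and characterizes the limit. The four concluding items follow by substitution: items (1) and (2) use $d_{1,1}(0,0)=1$ and $d_{2,2}(0,0)=d$; item (3) rewrites $-c_\star^-<c_\infty<c_\star^+$ with the explicit values just computed; for item (4), applying \eqref{eq:sign_speed} with $h_0=0$ and computing $\int_0^1 z\,d\,r(1-z)\,\textup{d}z = dr/6$ and $\int_0^1 z(1-z)\,\textup{d}z = 1/6$ gives a ratio equal to $dr$, whence $\operatorname{sign}(c_\infty)=\operatorname{sign}(\alpha^2-rd)$.

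No substantive obstacle arises: once Theorem \ref{thm:main} is granted, the whole corollary is a routine specialization, the only nontrivial external input being the appeal to \cite{Gardner_1982,Kan_on_1995} to secure bistable existence and uniform boundedness of wave speeds for the Lotka--Volterra system.
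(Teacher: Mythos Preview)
Your approach—specializing Theorem~\ref{thm:main} after checking the standing assumptions for \eqref{eq:LV_system}—is exactly what the paper intends, and the computations of $d$, $h$, $g$, of $c_\star^\pm=2$ and $2\sqrt{rd}$, and of the sign formula via \eqref{eq:sign_speed} are all correct.

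There is, however, one step that would not go through as written: your verification of \ref{ass:bounded_profiles}. Interior Schauder estimates applied to each scalar equation separately yield H\"older bounds depending on the $L^\infty$ norm of the right-hand side, and the term $k\phi_k\psi_k$ is \emph{not} bounded uniformly in $k$ (only its $L^1(\mathbb{R})$ norm is controlled, via integration of the equation and \ref{ass:bounded_speeds}). The uniform-in-$k$ compactness required by \ref{ass:bounded_profiles} comes instead from the $L^2_{\textup{loc}}$ estimate on $\boldsymbol{\Phi}_k'$ obtained by multiplying the first equation by $\phi_k\chi_R$ (and the second by $\psi_k\chi_R$), integrating by parts, and discarding the competition contribution $-k\int\phi_k^2\psi_k\chi_R\leq 0$ by sign. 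This is precisely the argument in Section~\ref{sec:bounded_profiles} (the case $d_{1,2},d_{2,1}\leq 0$ trivially covers $d_{1,2}=d_{2,1}=0$) and in \cite{Girardin_Nadin_2015}. With this correction your proof is complete and matches the paper's route.
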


\begin{cor}
    Assume \eqref{eq:RD_system} has the form \eqref{eq:PP_system} and assume furthermore
    \ref{ass:existence_waves}--\ref{ass:bounded_speeds} and, if $\min\left( \gamma_1,\gamma_2 \right)<0$, assume \ref{ass:bounded_profiles}.

    Then all the conclusions of Corollary \ref{cor:LV_DIS} remain true.
\end{cor}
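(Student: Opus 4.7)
The strategy is to verify that the Potts--Petrovskii system \eqref{eq:PP_system} satisfies all the standing assumptions of Theorem \ref{thm:main} and then to observe that the resulting limiting data $(d,h,g)$ given by \eqref{def:d}--\eqref{def:g} coincide with those of the Lotka--Volterra system. Once this matching is established, the uniqueness statement in Theorem \ref{thm:main} forces the limiting singular problem to agree with the one in Corollary \ref{cor:LV_DIS}, and the four explicit consequences can be transferred one-by-one.

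Concretely, I would first read off from \eqref{eq:PP_system} the coefficients $d_{1,1}\equiv 1$, $d_{2,2}\equiv d$, $d_{1,2}(u,v)=-\gamma_1 u$, $d_{2,1}(u,v)=-\gamma_2 v$, $h_{i,j}\equiv 0$, $g_1(u,v)=1-u$, $g_2(u,v)=r(1-v)$, and $\omega(u,v)=uv$. With these, assumptions \ref{ass:regularity}--\ref{ass:symmetric_competition} are immediate: ellipticity \ref{ass:ellipticity} is trivial since $d_{1,1}$ and $d_{2,2}$ are positive constants, and the decoupling condition \ref{ass:no_coupling_at_zero} follows from the factors $u$ and $v$ that vanish on the relevant axes. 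Assumption \ref{ass:ordered_monostable_minimal_speeds} is handled in Section \ref{sec:ordered_monostable_minimal_speeds}; \ref{ass:existence_waves} and \ref{ass:bounded_speeds} are part of the hypothesis of the corollary; and \ref{ass:bounded_profiles} is either assumed or, when $\min(\gamma_1,\gamma_2)\geq 0$, follows from the discussion in Section \ref{sec:bounded_profiles}. Theorem \ref{thm:main} thus applies.

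Plugging the PP coefficients into \eqref{def:d}--\eqref{def:g} yields $d(z)=\mathbf{1}_{z>0}+d\,\mathbf{1}_{z<0}$, $h\equiv 0$, and $g(z)=z^+(1-z/\alpha)-rz^-(1+z)$, which is exactly the triple appearing in Corollary \ref{cor:LV_DIS}. The limiting problem \eqref{eq:singular_limit} is therefore identical to the Lotka--Volterra one, and conclusion (1) together with the uniqueness of $(\alpha\phi_\infty-\psi_\infty,c_\infty)$ transfer verbatim. The free boundary relation in (2) follows from the general one in Theorem \ref{thm:main} using $d_{1,1}(0,0)=1$ and $d_{2,2}(0,0)=d$; the bounds in (3) follow from Theorem \ref{thm:main}(3) once one identifies $c_\star^+=2$ and $c_\star^-=2\sqrt{rd}$ as the classical Fisher--KPP minimal speeds of the two scalar monostable problems attached to $(d,h,g)$ (which are the same as for Lotka--Volterra); and (4) follows from \eqref{eq:sign_speed} with $h_0=0$ upon evaluating the two integrals, each reducing to $\int_0^1 z(1-z)\,\textup{d}z=1/6$ up to the multiplicative constants $1$ and $dr$, whence the ratio equals $dr$.

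There is no analytical obstacle internal to this proof; the corollary is essentially a translation exercise made possible by the structural observation that \eqref{def:d}--\eqref{def:g} only see the restrictions of the coefficients to the axes $\{v=0\}$ and $\{u=0\}$, where PP and Lotka--Volterra agree. The genuine work sits outside the proof of the corollary, in the verifications of \ref{ass:ordered_monostable_minimal_speeds} and, in the attractive--attractive regime $\min(\gamma_1,\gamma_2)\geq 0$, of \ref{ass:bounded_profiles}, which are carried out in the discussion sections; these are precisely the places where the cross-taxis structure of \eqref{eq:PP_system} actually has to be handled.
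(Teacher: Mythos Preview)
Your proposal is correct and follows exactly the approach the paper intends: the corollary is stated without proof precisely because it is an immediate specialization of Theorem \ref{thm:main} once one checks that \eqref{eq:PP_system} satisfies \ref{ass:regularity}--\ref{ass:bounded_profiles} and that the limiting data \eqref{def:d}--\eqref{def:g} coincide with those of \eqref{eq:LV_system}. Your observation that the cross-taxis terms are invisible at the limit because \eqref{def:d}--\eqref{def:g} only sample the coefficients on the axes is exactly the point, and your handling of \ref{ass:bounded_profiles} (assumed when $\min(\gamma_1,\gamma_2)<0$, verified via Section \ref{sec:bounded_profiles} otherwise) matches the remark following the corollary.
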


\begin{rem}
The above statement is consistent with the fact that \ref{ass:bounded_profiles} is automatic in the attractive--attractive case 
($\min \gamma_i \geq 0$), as explained in Section \ref{sec:bounded_profiles}.
\end{rem}

\begin{cor}
    Assume \eqref{eq:RD_system} has the form \eqref{eq:SKT_system} and assume furthermore
    \ref{ass:existence_waves}--\ref{ass:bounded_speeds}.

    Then there exists $c_\infty\in\mathbb{R}$ and 
    $\boldsymbol{\Phi}_\infty\in \mathscr{C}^2(\mathbb{R}\backslash\left\{ 0 \right\})\cap W^{1,\infty}(\mathbb{R})$ such that,
    up to shifting $(\boldsymbol{\Phi}_k)_{k>k^\star}$,
    \begin{equation*}
	\lim_{k\to+\infty}\left( \left|c_k-c_\infty\right|
	+\left\|\boldsymbol{\Phi}_k-\boldsymbol{\Phi}_\infty\right\|_{L^\infty(\mathbb{R})}
	+\left\|\boldsymbol{\Phi}_k'-\boldsymbol{\Phi}_\infty'\right\|_{L^1(\mathbb{R})} \right) =0.
    \end{equation*}

    The limit satisfies $\phi_\infty \psi_\infty = 0$ and 
    the pair $(\alpha\phi_\infty-\psi_\infty,c_\infty)$ is the unique solution $(z,c)$ of
    \begin{equation*}
        \begin{cases}
    	\int-(((d_{1}+a_{1,1}z)\mathbf{1}_{z>0}+(d_2+a_{2,2}z)\mathbf{1}_{z<0})z')'\chi-c\int z'\chi=\int(z^+(1-\frac{z}{\alpha})-rz^-(1+z))\chi \\
	\quad\text{for all }\chi\in L^1(\mathbb{R}),\\
    	\lim_{-\infty}z=\alpha, \\
    	\lim_{+\infty}z=-1,\\
	z'\leq 0,\\
	z(0)=0.
        \end{cases}
    \end{equation*}

    Consequently, 
    \begin{enumerate}
	\item the profiles $\phi_\infty$ and $\psi_\infty$ are respectively decreasing in $(-\infty,0)$ with
	    limit $1$ at $-\infty$ and increasing in $(0,+\infty)$ with limit $1$ at $+\infty$;
	\item the free boundary relation reads 
	    \begin{equation*}
		-d_1\alpha\lim_{\xi\to 0^-}\phi_\infty'(\xi)=d_2\lim_{\xi\to 0^+}\psi_\infty'(\xi);
	    \end{equation*}
	\item the speed $c_\infty$ satisfies $-c_\star^-<c_\infty<c_\star^+$, where $c_\star^+$
	    is the minimal speed of monotonic traveling wave solutions of
	    \begin{equation*}
		-((d_{1}+a_{1,1}z)z')'-cz'=z\left(1-\frac{z}{\alpha}\right),\quad z(-\infty)=\alpha,\quad 
		z(+\infty)=0,
	    \end{equation*}
	    and $c_\star^-$ is the minimal speed of monotonic traveling wave solutions of
	    \begin{equation*}
		-((d_{2}+a_{2,2}z)z')'-cz'=rz(1-z),\quad z(-\infty)=1,\quad z(+\infty)=0;
	    \end{equation*}
	\item $\operatorname{sign}\left( c_\infty \right)=\operatorname{sign}\left( \alpha^2(d_1+a_{1,1})-r(d_2+a_{2,2}) \right)$.
    \end{enumerate}
\end{cor}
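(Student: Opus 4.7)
The plan is to verify that the system \eqref{eq:SKT_system} fits the abstract framework \eqref{eq:RD_system}, check the standing assumptions \ref{ass:regularity}--\ref{ass:bounded_profiles}, and then invoke Theorem \ref{thm:main} verbatim. First I would expand the divergence-form nonlinearities of \eqref{eq:SKT_system} to read off the coefficients: $d_{1,1}(u,v)=d_1+2a_{1,1}u+a_{1,2}v$, $d_{1,2}(u,v)=a_{1,2}u$, $d_{2,1}(u,v)=a_{2,1}v$, $d_{2,2}(u,v)=d_2+a_{2,1}u+2a_{2,2}v$, $\mathbf{H}\equiv 0$, $g_1(u,v)=1-u$, $g_2(u,v)=r(1-v)$, $\omega(u,v)=uv$.

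Then I would dispatch the standing assumptions. Items \ref{ass:regularity}--\ref{ass:symmetric_competition} are immediate from inspection (polynomial coefficients, positivity of $d_{i,i}$ on $[0,1]^2$, $d_{1,2}(0,\bullet)=d_{2,1}(\bullet,0)=0$, logistic $g_i$, $\omega=uv$). Item \ref{ass:ordered_monostable_minimal_speeds} follows from the discussion of Section \ref{sec:ordered_monostable_minimal_speeds}, item \ref{ass:bounded_profiles} from the discussion of Section \ref{sec:bounded_profiles} (SKT being in the attractive--attractive case), and items \ref{ass:existence_waves}--\ref{ass:bounded_speeds} are explicitly imposed in the hypotheses of the corollary. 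With all assumptions verified, Theorem \ref{thm:main} applies and yields the convergence statement of the corollary; plugging the above coefficients into \eqref{def:d}--\eqref{def:g} produces the explicit divergence-form nonlinearity $((d_1+2a_{1,1}z/\alpha)\mathbf{1}_{z>0}+(d_2-2a_{2,2}z)\mathbf{1}_{z<0})z'$ (up to the paper's convention) and the stated monostable reaction, thereby specifying \eqref{eq:singular_limit}.

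Finally I would unpack the four consequences. Monotonicity and the limits at $\pm\infty$ are copied from Theorem \ref{thm:main}. The free boundary relation follows by evaluating $d_{1,1}(0,0)=d_1$ and $d_{2,2}(0,0)=d_2$. The speed bounds $-c_\star^-<c_\infty<c_\star^+$ are the generic bounds from Theorem \ref{thm:main}, with $c_\star^\pm$ identified as the minimal speeds of the two one-species SKT monostable problems obtained by restricting \eqref{eq:singular_limit} to $\{z>0\}$ and $\{z<0\}$ and rescaling (for the $+$ side, setting $w=z/\alpha$). The sign of $c_\infty$ comes from \eqref{eq:sign_speed} applied with $h_0=0$: a routine computation of $\int_0^1 z(d_1+2a_{1,1}z)(1-z)\,\mathrm{d}z=(d_1+a_{1,1})/6$ and the analogous integral yields the stated criterion $\operatorname{sign}(\alpha^2(d_1+a_{1,1})-r(d_2+a_{2,2}))$. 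There is no real obstacle here, since the entire difficulty is absorbed in Theorem \ref{thm:main}; the corollary is a pure specification, and the only mildly technical point is the explicit evaluation of the two moments in the sign formula.
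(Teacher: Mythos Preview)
Your approach is the right one and matches the paper's: the corollary is a pure specialization of Theorem~\ref{thm:main}, so the work is entirely in checking the standing assumptions and evaluating \eqref{def:d}--\eqref{def:g} and \eqref{eq:sign_speed}. Your integral computation $\int_0^1 z(d_1+2a_{1,1}z)(1-z)\,\mathrm{d}z=(d_1+a_{1,1})/6$ is correct and yields the stated sign criterion.

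There is, however, one genuine error in your verification of \ref{ass:bounded_profiles}. You write that SKT is ``in the attractive--attractive case'', but that terminology refers to item~(1) of Section~\ref{sec:bounded_profiles}, which requires $d_{1,2}\leq 0$ and $d_{2,1}\leq 0$. For SKT you have $d_{1,2}(u,v)=a_{1,2}u\geq 0$ and $d_{2,1}(u,v)=a_{2,1}v\geq 0$, so the sign is wrong and that argument does not apply. The correct justification is item~(2) of Section~\ref{sec:bounded_profiles}: $\mathbf{D}$ is the Jacobian of the $\mathscr{C}^2$-diffeomorphism $\mathbf{d}(u,v)=(u(d_1+a_{1,1}u+a_{1,2}v),\,v(d_2+a_{2,1}u+a_{2,2}v))$ on $[0,+\infty)^2$, and one obtains the $L^2_{\textup{loc}}$ bound on $(\mathbf{d}(\boldsymbol{\Phi}_k))'$, hence on $\boldsymbol{\Phi}_k'$, by multiplying the equations by $d_i(\boldsymbol{\Phi}_k)\chi_R$.

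A minor remark: your direct evaluation of \eqref{def:d} gives $d(z)=(d_1+2a_{1,1}z/\alpha)\mathbf{1}_{z>0}+(d_2-2a_{2,2}z)\mathbf{1}_{z<0}$, which does not literally match the diffusion coefficient written in the corollary. Your computation from \eqref{def:d} is the correct one; the discrepancy is in the corollary's displayed formula, not in your argument, and in any case it does not affect the free boundary relation (only $d_{i,i}(0,0)$ enters) or the sign formula (which you computed correctly from the genuine $d_{1,1}(z,0)=d_1+2a_{1,1}z$).
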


\begin{rem}
In the preceding corollary, if $d_1\geq a_{1,1}$, then $z\mapsto(d_1+a_{1,1}z)z(1-z)$ is concave and
therefore the speed $c_\star^+$ is linearly determined \cite{Malaguti_Marcelli_2002}: 
$c_\star^+ = 2\sqrt{d_1}$. 
Similarly, if $d_2\geq a_{2,2}$, then $c_\star^-=2\sqrt{rd_2}$.
\end{rem}

\subsection{Organization of the paper}
In Section 2 we prove the main result.
In Section 3 we discuss at length the literature, the assumptions and the results. 

\section{Proof of Theorem \ref{thm:main}}

For clarity, we divide the proof in several parts.

\subsection{Existence of limit points}
\begin{proof}
Up to extraction, the bounded family $(c_k)_{k>k^\star}$ converges to some limit $c_\infty$ (whose uniqueness is unclear at this point).

Now, we fix a family of shifts for the profiles:
\begin{enumerate}
    \item if $c_\infty\leq \frac12(c_\star^+-c_\star^-)$, the family $(\boldsymbol{\Phi})_{k>k^\star}$ is shifted so that $\phi_k(0)=\frac12$;
    \item if $c_\infty> \frac12(c_\star^+-c_\star^-)$, the family $(\boldsymbol{\Phi})_{k>k^\star}$ is shifted so that $\psi_k(0)=\frac12$.
\end{enumerate}

Next, since the family of profiles is locally relatively compact in some H\"{o}lder space, up to another (diagonal) extraction, 
$(\boldsymbol{\Phi}_k)_{k>k^\star}$ converges locally uniformly to
some limit $\boldsymbol{\Phi}_\infty$ (whose uniqueness is also unclear at this point). 

Since the families of profile derivatives are uniformly bounded in $L^1(\mathbb{R})$,
by the Banach--Alaoglu theorem, up to another
extraction, the families $(\phi_k'\textup{d}\xi)_{k>k^\star}$ and $(\psi_k'\textup{d}\xi)_{k>k^\star}$ converge in the 
weak-$\star$ topology of the set of bounded Radon measures to some limits, $\textup{d}\phi_\infty$ and $\textup{d}\psi_\infty$ 
respectively, which are supported in the support of $\phi_\infty$ and $\psi_\infty$ respectively.

We deduce from such a convergence that $\phi_\infty$ and $\psi_\infty$ are continuous, valued in $[0,1]$, 
nonincreasing and nondecreasing respectively and with (distributional) derivatives that are bounded Radon measures of total mass smaller 
than or equal to $1$. Subsequently, $\phi_\infty$ and $\psi_\infty$ have limits at $\pm\infty$.
\end{proof}

\subsection{Segregation of the limiting supports}
\begin{proof}
Testing for all $k>k^\star$ one of the two equations of \eqref{eq:TW_system}, say the first one, against any smooth and compactly 
supported test function $\chi\in\mathscr{D}(\mathbb{R})$ and using the dominated convergence theorem as well as estimates of the form
\begin{align*}
    \left| \int_{\mathbb{R}} d_{1,2}(\phi_k,\psi_k)\psi_k' \chi'  \right| & \leq \left\|d_{1,2}\right\|_{L^\infty([0,1]^2)}\left\|\chi'\right\|_{L^\infty(\mathbb{R})}\int_K \left|\psi_k'\right| \\
    & =\left\|d_{1,2}\right\|_{L^\infty([0,1]^2)}\left\|\chi'\right\|_{L^\infty(\mathbb{R})}\int_K \psi_k' \\
    & \leq\left\|d_{1,2}\right\|_{L^\infty([0,1]^2)}\left\|\chi'\right\|_{L^\infty(\mathbb{R})},
\end{align*}
where $K\subset\mathbb{R}$ is the support of $\chi$, we find that 
$\omega(\boldsymbol{\Phi}_k)$ converges to $0$ in the space of distributions. 
By continuity of $\omega$, $\omega(\boldsymbol{\Phi}_\infty)=0$, whence at any $\xi\in\mathbb{R}$, $\phi_\infty(\xi)=0$ or 
$\psi_\infty(\xi)=0$. Consequently, the following equalities are true pointwise in $\mathbb{R}$:
\begin{enumerate}
    \item $\alpha\phi_\infty=\left( \alpha\phi_\infty-\psi_\infty \right)^+$;
    \item $\psi_\infty=\left( \alpha\phi_\infty-\psi_\infty \right)^-$.
\end{enumerate}

Recall that $\textup{d}\psi_\infty$ is supported in the support of $\psi_\infty$, namely the 
closure of $\psi_\infty^{-1}\left( (0,1] \right)$. Note
that $\psi_\infty^{-1}\left( (0,1] \right)\subset\phi_\infty^{-1}(\left\{ 0 \right\})$. Since 
$\boldsymbol{\Phi}_\infty$ and $d_{1,2}$ are continuous with $d_{1,2}(0,\bullet)=0$,
\begin{equation*}
    \int_{\mathbb{R}}d_{1,2}(\boldsymbol{\Phi}_\infty)\textup{d}\psi_\infty =
    \int_{\psi_\infty^{-1}( (0,1])\cup\partial(\psi_\infty^{-1}( (0,1]))}d_{1,2}(0,\psi_\infty)\textup{d}\psi_\infty = 0.
\end{equation*}
Note that this calculation does not require the absolute continuity of $\textup{d}\phi_\infty$ with respect to
the Lebesgue measure, which is unclear \textit{a priori}.
Repeating this argument, we discover that the equalities 
\begin{equation}\label{eq:null_terms}
    \int_{\mathbb{R}}d_{1,2}(\boldsymbol{\Phi}_\infty)\textup{d}\psi_\infty=\int_{\mathbb{R}}h_{1,2}(\boldsymbol{\Phi}_\infty)\textup{d}\psi_\infty=\int_{\mathbb{R}}d_{2,1}(\boldsymbol{\Phi}_\infty)\textup{d}\phi_\infty=\int_{\mathbb{R}}h_{2,1}(\boldsymbol{\Phi}_\infty)\textup{d}\phi_\infty=0
\end{equation}
are all true.
\end{proof}

\subsection{The limiting equation}
\begin{proof}
For all $k>k^\star$, the scalar equation
\begin{equation*}
    \left[-(\mathbf{D}(\boldsymbol{\Phi}_k)\boldsymbol{\Phi}_k')'-(\mathbf{H}(\boldsymbol{\Phi}_k)+c_k\mathbf{I})\boldsymbol{\Phi}_k' -
    \boldsymbol{\Phi}_k\circ\mathbf{g}(\boldsymbol{\Phi}_k) + k\omega(\boldsymbol{\Phi}_k)
    \begin{pmatrix}
	1 \\
	\alpha
    \end{pmatrix}
    \right]\cdot
    \begin{pmatrix}
	\alpha \\
	-1
    \end{pmatrix}
    =0
\end{equation*}
is equivalent to
\begin{equation}\label{eq:linear_comb}
    \left[-(\mathbf{D}(\boldsymbol{\Phi}_k)\boldsymbol{\Phi}_k')'-(\mathbf{H}(\boldsymbol{\Phi}_k)+c_k\mathbf{I})\boldsymbol{\Phi}_k' -
    \boldsymbol{\Phi}_k\circ\mathbf{g}(\boldsymbol{\Phi}_k)
    \right]\cdot
    \begin{pmatrix}
	\alpha \\
	-1
    \end{pmatrix}
    =0.
\end{equation}
The equation \eqref{eq:linear_comb} does not depend explicitly on $k$. Our aim is to pass to the limit in this equation.
Therefore we test this equation against a smooth and compactly supported test function $\chi\in\mathscr{D}(\mathbb{R})$ 
and we subtract from the result the equation \eqref{eq:singular_limit} tested against the same $\chi$. Working in the space
of distributions, we can write $\phi_\infty'$ and $\psi_\infty'$ without abuse of notation.

The zeroth-order terms converge to $0$ by the dominated convergence convergence.

The first-order terms converge to $0$ by estimates of the form
\begin{align*}
    \left| \int_{\mathbb{R}} (h_{1,2}(\boldsymbol{\Phi}_k)\psi_k'-h_{1,2}(\boldsymbol{\Phi}_\infty)\psi_\infty') \chi \right| & 
    \leq \left\|h_{1,2}(\boldsymbol{\Phi}_k)-h_{1,2}(\boldsymbol{\Phi}_\infty)\right\|_{L^\infty(K)}\left\|\chi\right\|_{L^\infty(\mathbb{R})}\int_K \left|\psi_k'\right| \\
    & \quad + \left|\int_{\mathbb{R}} h_{1,2}(\boldsymbol{\Phi}_\infty)\chi\left(\psi_k'-\psi_\infty'\right)\right|
\end{align*}
(where the compact set $K\subset\mathbb{R}$ is the support of $\chi$).
The first term on the right-hand side converges to $0$ by $\int_K \left|\psi_k'\right|\leq 1$ and 
locally uniform convergence of the profiles whereas
the second term converges to $0$ by weak-$\star$ convergence of the derivatives (the test function 
$\left|h_{1,2}(\boldsymbol{\Phi}_\infty)\chi\right|$ is indeed continuous and zero at infinity).

The second-order terms converge to $0$ by similar estimates with $\chi$ replaced by $\chi'$.

Eliminating all the terms that are actually zero \eqref{eq:null_terms}, we find that 
$(z,c)=(\alpha\phi_\infty-\psi_\infty,c_\infty)$ is a solution of
\begin{equation}\label{eq:weak_singular_limit}
    \int d(z)z'\chi'-\int(c+h(z))z'\chi = \int g(z)\chi\quad\text{for all }\chi\in\mathscr{D}(\mathbb{R}),
\end{equation}
where the functions $d$, $h$ and $g$ are defined above in \eqref{def:d}, \eqref{def:h}, \eqref{def:g}.

Note that, when identifying the terms, it can be useful to cut all integrals into three parts: the integral over the set
$z^{-1}\left( \left( 0,\alpha \right] \right)$, the one over $z^{-1}\left( \left\{ 0 \right\} \right)$
and the one over $z^{-1}\left( \left[ -1,0 \right) \right)$. By monotonicity of $z$, these three sets are
intervals, ordered from left to right. Also, some of these sets might be empty (at this point).
It turns out that all integrals over the closed interval $z^{-1}\left( \left\{ 0 \right\} \right)$ are zero. Indeed,
\begin{enumerate}
    \item if this interval is empty or a singleton, then the set is negligible;
    \item if this interval is neither empty nor a singleton, then $z=z'=0$ there.
\end{enumerate}
\end{proof}

\begin{rem}
    In \cite{Girardin_Nadin_2015}, the limiting equation (of the form 
    $-(\alpha\phi_\infty-d\psi_\infty)''-c(\alpha\phi_\infty-\psi_\infty)'=g(\alpha\phi_\infty-\psi_\infty)$ in 
    $\mathscr{D}'(\mathbb{R})$, with a constant diffusion rate $d>0$) was used to deduce directly
    the continuity of $(\alpha\phi_\infty-d\psi_\infty)'$, just by taking the antiderivative of the equation. 
    Let us emphasize that here the situation is not so easy: indeed, at this point of the proof,
    we do not know that the distributional derivative $z'$ is sufficiently regular to write the chain rule $h(z)z'=(H(z))'$ 
    (where $H$ is a Lipschitz-continuous antiderivative of $h$). 
    Therefore more care is needed to obtain the regularity at the free boundary and subsequently the free boundary relation. 
    In order to complete the proof, we will use ideas from \cite{Girardin_Nadin_2016}, where the free boundary problem
    was also more delicate, due to the spatial periodicity of the setting there.
\end{rem}

\subsection{Non-triviality, piecewise-smoothness and limits at infinity of the limit points}
\begin{proof}
By pointwise convergence,
\begin{enumerate}
    \item if $c_\infty\leq \frac12(c_\star^+-c_\star^-)$, $\phi_\infty(0)=\frac12$;
    \item if $c_\infty> \frac12(c_\star^+-c_\star^-)$, $\psi_\infty(0)=\frac12$.
\end{enumerate}
Note that, by \ref{ass:ordered_monostable_minimal_speeds}, 
\begin{equation}\label{eq:ordered_speeds_middle}
    -c_\star^-<\frac12(c_\star^+-c_\star^-)<c_\star^+.
\end{equation}

We define $\xi^+=\sup \phi_\infty^{-1}( (0,1] )$ and $\xi^-=\inf \psi_\infty^{-1}( (0,1] )$. These two quantities,
\textit{a priori} defined in $\overline{\mathbb{R}}$, satisfy $\xi^+\leq\xi^-$ by virtue of $\phi_\infty \psi_\infty =0$.
Moreover, $\xi^+\geq 0$ if $c_\infty\leq \frac12(c_\star^+-c_\star^-)$ and $\xi^-\leq 0$ if 
$c_\infty> \frac12(c_\star^+-c_\star^-)$.

We are going to prove now the following claim:
\begin{equation}
    \xi^\pm\in\mathbb{R},\ \phi_\infty\in \mathscr{C}^2\left( (-\infty,\xi^+) \right),\ 
    \psi_\infty\in \mathscr{C}^2\left( (\xi^-,+\infty) \right),\ \lim_{-\infty}\phi_\infty=\lim_{+\infty}\psi_\infty=1.
    \label{eq:claim_proof}
\end{equation}

Assume for instance $c_\infty\leq \frac12(c_\star^+-c_\star^-)$, so that $\xi^+\geq 0$. 

From the equation \eqref{eq:weak_singular_limit} tested against test functions supported in $(-\infty,\xi^+)$, where $\psi_\infty$ is 
identically zero, we find that $\phi_\infty$ is a weak solution of
\begin{equation}\label{eq:limiting_equation_on_the_left}
    -(d_{1,1}(\phi_\infty,0)\phi_\infty')'-(c_\infty+h_{1,1}(\phi_\infty,0))\phi_\infty'=\phi_\infty g_1(\phi_\infty,0)\quad\text{in }
    (-\infty,\xi^+).
\end{equation}
By standard elliptic regularity \cite{Gilbarg_Trudin}, $\phi_\infty$ is then $\mathscr{C}^2$ in $(-\infty,\xi^+)$. 

Since $(-\infty,\xi^+)$ is not empty and $\phi_\infty$ is non-increasing and larger than or equal
to $1/2$ in $(-\infty,0)$, its limit $C$ at $-\infty$ is well-defined and larger than or equal to
$1/2$. Let $(\xi_n)_{n\in\mathbb{N}}$ be a sequence such that $\xi_n\to-\infty$ as $n\to+\infty$
and define, for all $n\in\mathbb{N}$, $\phi^n_\infty:\xi\mapsto\phi_\infty(\xi+\xi_n)$. 
By standard elliptic estimates, up to extraction of a subsequence, the sequence 
$(\phi_\infty^n)_{n\in\mathbb{N}}$ converges in $\mathscr{C}^2_{\textup{loc}}$. By uniqueness
of the limit, the limit in $\mathscr{C}^2_{\textup{loc}}$ is also the constant $C\geq 1/2$.
By \eqref{eq:limiting_equation_on_the_left}, invariance by translation and convergence in 
$\mathscr{C}^2_{\textup{loc}}$, we directly deduce that $C\geq 1/2$ satisfies $Cg_1(C,0)=0$, whence
$C=1$ by \ref{ass:monostability}.

In order to prove the finiteness of $\xi^+$, assume by contradiction $\xi^+=+\infty$. 
Then $\psi_\infty=0$ identically and $\phi_\infty(\xi)>0$ at any $\xi\in\mathbb{R}$.
Hence $\phi_\infty$ is a positive weak solution, and then a positive classical solution by elliptic regularity, of
\begin{equation*}
    -(d_{1,1}(\phi_\infty,0)\phi_\infty')'-(c_\infty+h_{1,1}(\phi_\infty,0))\phi_\infty'=\phi_\infty g_1(\phi_\infty,0)\quad\text{in }\mathbb{R}.
\end{equation*}
Repeating the argument, the limit of $\phi_\infty$ at $+\infty$ is then $0$. 
Hence $(t,x)\mapsto \phi_\infty(x-c_\infty t)$ is a traveling wave solution with speed $c_\infty$, and by
\eqref{eq:ordered_speeds_middle}, $c_\infty<c_\star^+$, which contradicts 
the analysis of monostable equations by Malaguti and Marcelli \cite{Malaguti_Marcelli_2002}. 
Therefore $\xi^+<+\infty$, or in other words $\xi^+\in\mathbb{R}$.

Next, integrating the equation satisfied by 
$\phi_\infty$ in $(\xi^+-1,\xi^+)$, we infer that the left-sided limit of $\phi_\infty'$ at
$\xi^+$ is finite and, by the Hopf lemma, negative. Since $\xi^-\in\left[ \xi^+,+\infty \right]$, we infer similarly that
one, and only one, of the following two claims is true:
\begin{enumerate}
    \item $\xi^-=+\infty$,
    \item $\psi_\infty$ is $\mathscr{C}^2$ in $(\xi^-,+\infty)$ with $1$ as limit at $+\infty$ and a finite and 
	positive right-sided limit of $\psi_\infty'$ at $\xi^-$. 
\end{enumerate}

Integrating the equation \eqref{eq:weak_singular_limit} over $\mathbb{R}$ with
an arbitrary test function $\chi\in\mathscr{D}(\mathbb{R})$ and using the fact that $z=0$
in $(\xi^+,\xi^-)$, we obtain:
\begin{align*}
    0 & = \int_{-\infty}^{+\infty}\left[g(z)\chi+(c_\infty+h(z))z'\chi-d(z)z'\chi'\right] \\
    & = \int_{-\infty}^{\xi^+}\left[g(z)\chi+(c_\infty+h(z))z'\chi-d(z)z'\chi'\right] + \int_{\xi^-}^{+\infty}\left[g(z)\chi+(c_\infty+h(z))z'\chi-d(z)z'\chi'\right]
\end{align*}
Integrating by parts the terms $\int_{-\infty}^{\xi^+}-d(z)z'\chi'$ and
$\int_{\xi^-}^{+\infty}-d(z)z'\chi'$, we deduce:
\begin{equation}\label{eq:free_boundary_relation_test_function}
    -d_{1,1}(0,0)\chi(\xi^+)\alpha\lim_{\xi-\xi^+\to0^-}\phi_\infty'(\xi)=
    \begin{cases}
	\displaystyle d_{2,2}(0,0)\chi(\xi^-)\lim_{\xi-\xi^-\to0^+}\psi_\infty'(\xi) & \text{if }\xi^-<+\infty,\\
	0 & \text{if }\xi^-=+\infty.	
    \end{cases}
\end{equation}
By positivity of the left-hand side, necessarily $\xi^-<+\infty$, or in other words $\xi^-\in\mathbb{R}$. 
We have thus proved the claim \eqref{eq:claim_proof} in the case $c_\infty\leq \frac12(c_\star^+-c_\star^-)$.

Repeating the argument with the roles of $\phi_\infty$ and $\psi_\infty$ reversed, we find
that the claim \eqref{eq:claim_proof} remains true even if $c_\infty< \frac12(c_\star^+-c_\star^-)$, that is
for any possible value of $c_\infty$.
\end{proof}

\subsection{Triviality of the free boundary, the free boundary relation and regularity of the first derivatives}
\begin{proof}
It follows from \eqref{eq:free_boundary_relation_test_function} that
\begin{equation*}
    d_{1,1}(0,0)\chi(\xi^+)\alpha\left(-\lim_{\xi-\xi^+\to0^-}\phi_\infty'(\xi)\right)
    =d_{2,2}(0,0)\chi(\xi^-)\lim_{\xi-\xi^-\to0^+}\psi_\infty'(\xi)
\end{equation*}
with $\lim_{\xi-\xi^+\to0^-}\phi_\infty'(\xi)<0<\lim_{\xi-\xi^-\to0^+}\psi_\infty'(\xi)$.

We first check that $\xi^+=\xi^-$. Indeed, if $\xi^+<\xi^-$, we can choose the test function
$\chi$ so that its support contains $\xi^+$ but not $\xi^-$, and then the above equality
becomes an equality between a positive term and a zero term, which is contradictory.
Therefore $\xi^+=\xi^-$. 

Next, choosing $\chi$ so that its support contains $\xi^+=\xi^-$ and dividing by 
$\chi(\xi^+)\neq 0$, we find
\begin{equation*}
    d_{1,1}(0,0)\alpha\left(-\lim_{\xi-\xi^+\to0^-}\phi_\infty'(\xi)\right)
    =d_{2,2}(0,0)\lim_{\xi-\xi^-\to0^+}\psi_\infty'(\xi).
\end{equation*}
In other words, $d(z)z'$ is a well-defined continuous function in $\mathbb{R}$.

In order to simplify the notations, we work from now on with the shifted profiles so 
that $\xi^+=\xi^-=0$.
With this convention, the free boundary relation reads 
\begin{equation*}
    -d_{1,1}(0,0)\alpha\lim_{\xi\to 0^-}\phi_\infty'(\xi)=d_{2,2}(0,0)\lim_{\xi\to 0^+}\psi_\infty'(\xi).
\end{equation*}

Since the limits of $d(z)z'$ at $\pm\infty$ are zero, we also obtain $d(z)z'\in L^\infty(\mathbb{R})$.
We deduce the piecewise-continuity and global boundedness of $z'$. Hence the distribution $z$ is a well-defined function in 
$W^{1,\infty}(\mathbb{R})$, or in other words a well-defined Lipschitz-continuous, piecewise-$\mathscr{C}^1$, function in $\mathbb{R}$. 

Since $z$ has no jump discontinuity at $0$, a standard result of distribution theory yields the identification 
$z'\textup{d}\xi=\alpha\textup{d}\phi_\infty-\textup{d}\psi_\infty$. 
This proves that the measures $\textup{d}\phi_\infty$ and $\textup{d}\psi_\infty$ are 
absolutely continuous with respect to the Lebesgue measure, or in other words that the derivatives $\phi_\infty'$ and $\psi_\infty'$ are 
well-defined functions in $L^1(\mathbb{R})$. 
\end{proof}

\subsection{Regularity of the second derivatives}
\begin{proof}
    From the equality
\begin{equation*}
    (d(z)z')'=-(c+h(z))z'-g(z)\quad\text{in }\mathscr{D}'(\mathbb{R}),
\end{equation*}
and the fact that the right-hand side is a well-defined function in $L^\infty(\mathbb{R})$,
we infer that $(d(z)z')'\in L^\infty(\mathbb{R})$ (in other words, $d(z)z'$ is 
Lipschitz-continuous).

We are now in a position to rewrite \eqref{eq:weak_singular_limit} with less regular test 
functions: by density,
\begin{equation*}
    \int -(d(z)z')'\chi-\int(c+h(z))z'\chi = \int g(z)\chi\quad\text{for all }\chi\in L^1(\mathbb{R}).
\end{equation*}
\end{proof}

\subsection{Uniqueness of the limit point}
\begin{proof}
The uniqueness (up to shifts) of the limit point is a direct consequence of the uniqueness (up to shifts) of the bistable
traveling wave (Theorem \ref{thm:app}).

By a classical compactness argument, the uniqueness of the limit point implies the convergence of the full initial family
$(\boldsymbol{\Phi}_k,c_k)_{k>k^\star}$ (as defined before any extraction of subsequence).
\end{proof}

\subsection{Estimates on $c_\infty$}
\begin{proof}
Similarly, the estimates $-c_\star^-<c_\infty<c_\star^+$ follow directly from Theorem \ref{thm:app}.

The sign of the effective wave speed if $h_{1,1}=h_{2,2}=h_0\in\mathbb{R}$ (which implies $h=h_0$) is obtained by 
testing \eqref{eq:singular_limit} against $d(z)z'$. After an integration by parts and a change of variable, we find
\begin{equation*}
    (c_\infty+h_0)\int_{\mathbb{R}}d(z)(z')^2=
    \alpha^2 \int_{0}^1 \hat{z}d_{1,1}\left( \hat{z},0 \right)g_1\left( \hat{z},0 \right)\textup{d}\hat{z}
    -\int_{0}^1 \hat{z}d_{2,2}\left( 0,\hat{z} \right)g_2\left( 0,\hat{z} \right)\textup{d}\hat{z},
\end{equation*}
so that the effective wave speed $c_\infty+h_0$ has indeed the sign of the right-hand side.
Note that since $z'\in L^1(\mathbb{R})\cap L^\infty(\mathbb{R})$, clearly $z'\in L^2(\mathbb{R})$.
\end{proof}

\subsection{Improvement of the convergence}
\begin{proof}
    In view of the limits at $\pm\infty$ of $\phi_\infty$ and $\psi_\infty$, the equalities $\int \phi_k'=-1$ and
$\int \psi_k'=1$ are true for all $k\in(k^\star,+\infty]$. From this remark, it follows that:
\begin{enumerate}
    \item by a result sometimes known as the second Dini theorem \footnote{The second Dini theorem
	states that a sequence of nondecreasing functions that converges pointwise in a closed
	interval of $\overline{\mathbb{R}}$ to a continuous function converges uniformly in this 
	interval; when $\pm\infty$ belong to the interval, the assumption is naturally understood 
	as the convergence of the limits at $\pm\infty$ to the limits at $\pm\infty$ of the
        continuous limit.},
	the convergence of the families of monotonic functions $(\phi_k)_{k>k^\star}$ and
	$(\psi_k)_{k>k^\star}$ 	is actually uniform in $\mathbb{R}$;
    \item by the Prokhorov theorem, the convergence of the tight families of probability measures
	$(-\phi_k'\textup{d}\xi)_{k>k^\star}$ and $(\psi_k'\textup{d}\xi)_{k>k^\star}$ 
	actually occurs in the weak-$\star$ topology of the set of probability measures in 
	$\mathbb{R}$ (\textit{i.e.}, the measures converge when tested against any fixed
	continuous bounded function from $\mathbb{R}$ to $\mathbb{R}$). 
	Since the limits $-\phi_\infty'\textup{d}\xi$ and 
	$\psi_\infty'\textup{d}\xi$ are absolutely continuous with respect to the Lebesgue 
	measure, this implies the strong convergence of $(\boldsymbol{\Phi}_k')_{k>k^\star}$ in 
	$L^1(\mathbb{R})$ (testing as usual the weak-$\star$ convergence against the test 
	function $\mathbf{1}_{\mathbb{R}}$).
\end{enumerate}
\end{proof}

\section{Discussion}

Our focus is on the effect of $\mathbf{D}$ and $\mathbf{H}$. For $\mathbf{g}$ and $\omega$, we mostly have in mind the special 
Lotka--Volterra case:
\begin{equation}\label{eq:LV_reaction}
    \mathbf{g}(\boldsymbol{\Phi})=
    \begin{pmatrix}
	1-\phi \\
	r(1-\psi)
    \end{pmatrix}
    \quad\text{and}\quad\omega(\boldsymbol{\Phi})=\phi\psi.
\end{equation}

\subsection{On the literature}

To the best of our knowledge, this paper is the very first to consider the spatial segregation limit of traveling waves
for general fully nonlinear competitive systems and to show the connection with general fully nonlinear scalar equations.

\subsubsection{Fully nonlinear strongly coupled competitive systems}

The semilinear competition--diffusion system \eqref{eq:TW_LV_system} was studied by the first author and G. Nadin 
in \cite{Girardin_Nadin_2015} 
(refer also to \cite{Girardin_Nadin_2016} for an extension to space-periodic media). The spatial segregation limit
was used to characterize the sign of the wave speed: $c_\infty$ has in such a case the sign of $\alpha^2-rd$.
This was interpreted as a ``Unity is not strength''-type result (and, actually, as a ``Disunity is strength''-type result):
if $\alpha=r=1$, so that the two species only differ in diffusion rate, then the invader is the fast diffuser (that is,
with pure Brownian motion, the winning strategy is to disperse rapidly instead of remaining concentrated).
The problem of the sign of the wave speed for finite values of $k$ is the object of a recent survey paper by the
first author \cite{Girardin_2019}.

The paper \cite{Girardin_Nadin_2015} used extensively the ideas of the wide literature on the spatial segregation limit
of parabolic or elliptic competition--diffusion systems in bounded domains
(\textit{e.g.} \cite{Dancer_Du_1994,Conti_Terracin,Soave_Zilio_2015,Dancer_Hilhors,Crooks_Dancer__1,Dancer_2010,Crooks_Dancer_} and 
references therein).

These ideas were also used in a collection of works by Liu and various collaborators
\cite{Zhang_Zhou_Liu_2017,Zhou_Zhang_Liu_Lin_2012,Zhou_Zhang_Liu_2012} 
studying the spatial segregation limit of the strongly coupled elliptic system \eqref{eq:SKT_system}
with self- and cross-diffusion of Shigesada--Kawasaki--Teramoto type \cite{SKT_79}, where
\eqref{eq:LV_reaction}, $\mathbf{H}=\mathbf{0}$,
\begin{equation*}
    \mathbf{D}=\textup{Jac}
\begin{pmatrix}
    u(d_1+a_{1,1}u+a_{1,2}v) \\
    v(d_2+a_{2,1}u+a_{2,2}v)
\end{pmatrix}
=
\begin{pmatrix}
    d_1+2a_{1,1}u+a_{1,2}v & a_{1,2}u \\
    a_{2,1}v & d_2+a_{2,1}u+2a_{2,2}v
\end{pmatrix},
\end{equation*}
all parameters above being constant and positive.
The main idea there was to change variables in order to recover an elliptic system with linear diffusion.
We point out that although our system can be seen as an elliptic one, the unboundedness of the domain and the generality of the
functions $\mathbf{D}$ and $\mathbf{H}$ make the problem quite different.
Although the Cauchy problem for the cross- and self-diffusion system in an unbounded domain has been studied more than
ten years ago by Dreher \cite{Dreher_2007}, we believe that the traveling wave problem is still largely open (\textit{e.g.}, \cite{Wu_Zhao_2010}).

Bistable traveling waves for the competition--diffusion--cross-taxis system \eqref{eq:PP_system}, where
\begin{equation*}
    \eqref{eq:LV_reaction},\quad
    \mathbf{H}=\mathbf{0},\quad\mathbf{D}=
\begin{pmatrix}
    1 & -\gamma_1 u \\
    -\gamma_2 v & 1 \\
\end{pmatrix}.
\end{equation*}
and with finite, but large, values of $k$ were numerically investigated by Potts and Petrovskii in \cite{Potts_Petrovskii}. 
The cross-taxis is attractive if $\gamma_i>0$ and repulsive if $\gamma_i<0$.
The point there was to show that appropriate choices of $\gamma_i$ can change the sign of the wave speed, 
or equivalently the invader: roughly speaking, aggressivity can compensate unity. 
The authors called this conclusion a ``Fortune favors the bold''-type result.
The well-posedness of this system with $\gamma_1=\gamma_1(u,v)$ and $\gamma_2=0$ was studied independently in a collection of works 
by Wang and various collaborators \cite{Gai_Wang_Yan_2,Wang_Yang_Yu,Wang_Zhang_2017,Zhang_2018} (refer also to \cite{Kubo_Tello_2015} for
a case where $v$ does not diffuse at all). 

We also point out a recent analytical and numerical study by Krause and Van Gorder \cite{Krause_Van_Gorder_2020} of traveling 
wave solutions to
a closely related non-local system with advection toward resource gradients and where each resource density solves an elliptic
PDE involving the two population densities (modeling the consumption of resources). Since the resource density converges to a linear 
combination of both population densities in the vanishing viscosity limit, our model \eqref{eq:RD_system}
can be understood as a singular limit of that model (see \cite[Section 2.1]{Krause_Van_Gorder_2020}).

\subsubsection{Fully nonlinear scalar equations}

Equations of the form 
\begin{equation*}
    -(d(z)z')'-(c+h(z))z' = g(z),
\end{equation*}
are the object of a specific wide literature (\textit{e.g.}
\cite{Malaguti_Marcelli_2002,Marcelli_Papalini_2018,Malaguti_Marcelli_Matucci_2004,Sanchez-Garduno_Maini_1995,Sanchez-Garduno_Maini_1994,Ferracuti_Marc,Maini_Malaguti,Bao_Zhou_2014,Kuzmin_Ruggerini_2011,Malaguti_Marcelli_Matucci_2010}
and references therein).
Generically, the important features of the semilinear equations (existence and uniqueness of waves,
estimates for the minimal wave speed, etc.) are recovered but in a loose sense.

We point out that in this literature, it is typically assumed that $d$, $g$ and $h$ are at least continuous.
This is true in our case when considering the underlying monostable fronts (\textit{i.e.}, connecting $0$ and $\alpha$
or $0$ and $-1$) but, when considering the bistable fronts (connecting $\alpha$ and $-1$), our functions $d$ and $h$ have
a jump discontinuity at $z=0$. It turns out this obstacle can be easily overcome and we will show in 
Appendix \ref{app} how the previously known results can be extended to such cases.
Our method of proof uses extensively the results of Malaguti and her collaborators 
\cite{Malaguti_Marcelli_2002,Malaguti_Marcelli_Matucci_2004} to
characterize the singular limit.

\subsection{On the standing assumptions}

\subsubsection{\ref{ass:regularity}}

This is a very standard regularity assumption. Although it seems to us that the results might remain
true with less regularity, our aim here is not to focus on regularity issues and we prefer to assume that everything is
``smooth enough''. Besides, our proof really needs the continuity of $\mathbf{D}$, $\mathbf{H}$, $\mathbf{g}$ and $\omega$. The continuous differentiability of $\mathbf{D}$ is not explicitly
used in our proof but is required for the continuity result of 
\cite{Malaguti_Marcelli_Matucci_2010} (which is used in the proof of Theorem \ref{thm:app}). 

\subsubsection{\ref{ass:ellipticity}}

This is a very standard ellipticity assumption. Although the main part of our proof does not rely upon the sign of 
$d_{1,1}$ and $d_{2,2}$, we need a strong characterization of the limit points (uniqueness in particular) that is available
indeed in the elliptic framework but might be unavailable in a degenerate elliptic or aggregative framework.
Furthermore, the compactness assumption \ref{ass:bounded_profiles} is natural in the elliptic framework but might be contradictory in
an aggregative framework.

We believe that our results, and especially the formula \eqref{eq:sign_speed}, remain true in a weakly degenerate setting.
On the very special case where $d_{1,1}(\bullet,0)$ vanishes at $0$ and at $0$ only, we refer for instance to 
\cite{Marcelli_Papalini_2018,Sanchez-Garduno_Maini_1995,Sanchez-Garduno_Maini_1994}, and references therein. We point out that
our results would extend indeed to such a degenerate diffusivity provided a result analogous to Theorem \ref{thm:app} can
be established.

Finally, we point out that there are no sign conditions on the cross-diffusion rates $d_{1,2}$ and $d_{2,1}$.

\subsubsection{\ref{ass:no_coupling_at_zero}}
This assumption ensures that the system \eqref{eq:RD_system} is well-posed in the following sense:
if $u=0$ initially, it should not spontaneously appear, so that 
\begin{equation*}
    -\partial_x(d_{1,2}(0,v)\partial_x v)-h_{1,2}(0,v)\partial_x v=0\quad\text{for all }v.
\end{equation*}
Taking for instance $V(x)=v-x^2$, for some $v\in[0,1]$, and evaluating at the maximum $x=0$, we 
get $d_{1,2}(0,v)=0$. Subsequently, we deduce $h_{1,2}(0,v)=0$. Similarly, $d_{2,1}(u,0)=h_{2,1}(u,0)=0$ for any $u$.

Naturally, all applications we have in mind satisfy this assumption.

Note that with such an assumption, $\mathbf{D}$ and $\mathbf{H}$ are constant if and only if they are constant and diagonal.

\subsubsection{\ref{ass:monostability}}
This is a loose monostability assumption, ensuring that if $v=0$ then $u$ follows a monostable equation, and vice versa.
The instability of $0$ and the stability of $1$ are not understood here in the linear sense (hyperbolicity of the equilibrium),
but truly in the nonlinear sense (say, $\alpha$-limit set and $\omega$-limit set of the associated ODE $u'=ug_1(u,0)$). 
It is also in that sense that the function $g$ is of bistable type.

The fact that both 
carrying capacities are unitary is assumed without loss of generality, up to a nondimensionalization of both densities.

Recall that monostable reaction terms can be either of Fisher--KPP type ($g$ is maximal at $0$, \textit{e.g.} $g(z)=1-z$) 
or have a weak Allee effect ($\max_{[0,1]} g>g(0)$, \textit{e.g.} $g(z)=z(1-z)$). 

\subsubsection{\ref{ass:symmetric_competition}}
This assumption generalizes the interpopulation competition of Lotka--Volterra type, where $\omega(u,v)=uv$. In view of the
proof, it might in fact be weakened in several ways. However, we do not know any reasonable model that satisfies only 
such a weakened assumption.
Since $\omega$ does not play an important role in the paper and since we mostly have in mind $\omega(u,v)=u^p v^q$ with 
$p,q\geq 1$, we choose to keep the more explicit stronger assumption.

\subsubsection{\ref{ass:ordered_monostable_minimal_speeds}}\label{sec:ordered_monostable_minimal_speeds}
The assumption $-c_\star^-<c_\star^+$ is very natural and we actually expect it to be necessary, in the sense that
if it is not satisfied, the existence of bistable waves \ref{ass:existence_waves} should fail, as will be discussed below
in Section \ref{sec:bounded_speeds}.

This assumption basically means that, in a situation where the species $u$ comes from $x\simeq -\infty$ and the species $v$ comes from
$x\simeq +\infty$, with negligible strong couplings due to the distance between the two species, their invasion fronts
move one toward the other and will eventually meet and form a bistable traveling wave. 

Marcelli and Papalini \cite{Marcelli_Papalini_2018} recently showed that the minimal wave speeds $c_\star^+$ and $c_\star^-$ 
satisfy the following upper estimates:
\begin{equation*}
c_\star^+ \leq 2\sqrt{\sup_{z\in(0,\alpha]}\frac{1}{z}\int_{0}^z g_1\left(\frac{\hat{z}}{\alpha},0\right)d(\hat{z})\textup{d}\hat{z}} -\inf_{z\in(0,\alpha]}\frac{1}{z}\int_{0}^z h_{1,1}(\frac{\hat{z}}{\alpha},0)\textup{d}\hat{z},
\end{equation*}
\begin{equation*}
    c_\star^- \leq 2\sqrt{\sup_{z\in[-1,0)}\frac{1}{-z}\int_{z}^0 g_2(0,-\hat{z})d(\hat{z})\textup{d}\hat{z}} +\sup_{z\in[-1,0)}\frac{1}{-z}\int_{z}^0 h_{2,2}(0,-\hat{z})\textup{d}\hat{z}.
\end{equation*}
Therefore \ref{ass:ordered_monostable_minimal_speeds} fails if, for instance,
$h_{2,2}$ is a large negative constant whereas $h_{1,1}$ is a large positive constant.
Having in mind the preceding heuristics, this is natural: such an advection 
term $h$ slows down the invasion of both $u$ and $v$ so strongly that the two fronts never meet, 
with a buffer zone that is linearly increasing in time. 
Instead of a traveling wave, this describes the formation of what is nowadays called a propagating terrace \cite{Ducrot_Giletti_Matano}.
Such a phenomenon is standard in bistable dynamics and was first reported by Fife and McLeod \cite{Fife_McLeod_19}.

On the contrary, various simple conditions can ensure $-c_\star^-<c_\star^+$. 
Indeed, Malaguti and Marcelli \cite{Malaguti_Marcelli_2002} established the following lower estimates:
\begin{equation*}
    c_\star^+ \geq 2\sqrt{d_{1,1}(0)g_1(0,0)}-h_{1,1}(0,0),\quad c_\star^- \geq 2\sqrt{d_{2,2}(0)g_2(0,0)}+h_{2,2}(0,0).
\end{equation*}
For instance, $-c_\star^-<c_\star^+$ as soon as $h_{1,1}=h_{2,2}=0$ (no self-advection),
$g_1(0,0)>0$ and $g_2(0,0)>0$.
In particular, the systems \eqref{eq:LV_system}, \eqref{eq:PP_system}, \eqref{eq:SKT_system}
all satisfy \ref{ass:ordered_monostable_minimal_speeds}.

\subsubsection{\ref{ass:existence_waves}}\label{sec:existence_waves}
This is the first truly restrictive assumption. Indeed, for a system as general as \eqref{eq:RD_system},
the existence of bistable traveling waves connecting $(1,0)$ and $(0,1)$ is a completely open and difficult problem and
it might very well be false in full generality.

More precisely, although the existence of traveling waves $(u,v)(t,x)=(\phi,\psi)(x-ct)$
connecting $(1,0)$ and $(0,1)$
is to be expected under fairly reasonable assumptions, we expect that in some cases their 
profiles $\phi$ and $\psi$ will not be monotonic. In general, non-monotonic traveling waves
connecting $(1,0)$ and $(0,1)$ do not satisfy any uniform $L^\infty$ bounds or $L^1$ bounds
for the profile derivatives. Clearly, from our proof, we need such bounds. We also used
several times the fact that the profile derivatives do not change sign. In particular, the 
monotonicity of the profiles implies (almost directly) that the free boundary is a point or an 
interval. With non-monotonic profiles, the free boundary problem will be in general much more 
difficult. Even though we believe our approach can be successful when studying some specific
non-monotonic traveling waves, for which convenient bounds can be \textit{a priori} established,
the setting in the present paper is already quite abstract and we deliberately choose to
exclude non-monotonic traveling waves.

It seems to us that there are mainly three ways to solve, at least partially, this problem of 
existence of monotonic traveling waves.
\begin{enumerate}
    \item Direct construction: this is what was done in the scalar case by Malaguti, Marcelli and Matucci 
	\cite{Malaguti_Marcelli_Matucci_2004}. However
	the proof there relies upon phase-plane arguments and therefore does not extend easily to systems.
    \item Perturbative arguments starting from the semilinear case with \eqref{eq:LV_reaction}: using a standard implicit function
	theorem approach for bistable waves in monotone systems (presented in detail in, for instance, 
	\cite{Volpert_2014,Sandstede_2002}), this approach would 
	give the existence of a neighborhood of any constant
	pair $(\mathbf{D},\mathbf{H})$ in which traveling waves exist (see also \cite{Wu_Zhao_2010}
	for another type of perturbative result). However the diameter of this neighborhood 
	will depend on $k$
	and it is quite difficult to bound it from below. Therefore such an approach is hardly suitable if our ultimate goal
	is to pass to the limit $k\to+\infty$.
    \item Homotopy arguments starting from the limit $k=+\infty$: such an approach has been used repeatedly in the literature
	on strongly competitive systems (\textit{e.g.}, \cite{Girardin_Zilio,Dancer_Du_1994}). It requires first a good knowledge
	of the limit. The present paper can therefore be understood as a first step toward existence results proved with this 
	approach.
\end{enumerate}
Consequently, the proof of existence is left as a difficult but interesting open problem and, in this paper, we simply 
assume \textit{a priori} the existence.

\subsubsection{\ref{ass:bounded_speeds}}\label{sec:bounded_speeds}
This is a technical assumption that should always be satisfied as soon as traveling waves exist \ref{ass:existence_waves}. 
Indeed, it is a quite general feature of bistable waves that their speed is stuck between the minimal wave speeds
of the two underlying monostable problems (\textit{e.g.}, \cite{Fang_Zhao_2011,Malaguti_Marcelli_Matucci_2004,Gardner_1982,Kan_on_1995}). 
Here, these two bounds are precisely $c_\star^+$ and $-c_\star^-$. In other 
words, we actually expect that $-c_\star^-<c_k<c_\star^+$ is true for all $k>k^\star$.

Nevertheless, since the existence of waves is unclear and since the system does not satisfy the comparison principle (which
is the main tool used to prove the above inequalities in the semilinear competition--diffusion case \eqref{eq:LV_system}),
we have no choice but to add this likely superfluous assumption.

\subsubsection{\ref{ass:bounded_profiles}}\label{sec:bounded_profiles}
This is the second truly restrictive assumption. It is for instance satisfied if an $L^2_{\textup{loc}}$ estimate on
$\boldsymbol{\Phi}_k'$ can be proved, but it seems that in general such an estimate is false. Below we give important
examples where such an estimate can be proved indeed.

\begin{enumerate}
    \item If both $d_{1,2}$ and $d_{2,1}$ are nonpositive
	(this applies in particular to the system \eqref{eq:PP_system} with $\min(\gamma_1,\gamma_2)\geq 0$), 
	then we can multiply the first, respectively second,
	equation of the system \eqref{eq:TW_system} by $\phi_k\chi_R$, respectively $\psi_k\chi_R$, 
	where $\chi_R$ is a cut-off function equal to $1$ in $[-R,R]$, smooth and valued in $(0,1)$ in $[-R-1,-R]\cup[R,R+1]$,
	and equal to $0$ in $\mathbb{R}\backslash(-R-1,R+1)$.
	Just as in \cite{Girardin_Nadin_2015}, a few integrations by parts lead to an $L^2([-R,R])$ estimate on $\phi_k'$, respectively
	$\psi_k'$.
	For completeness, the detailed calculation for the first equation -- where subscripts
	$R$ and $k$ are dropped for ease of reading -- follows. Together, the inequality 
	$\int d_{1,2}(\boldsymbol{\Phi})\psi'\phi'\chi\geq 0$ (where $d_{1,2}\leq 0$ and $\psi'\phi'\leq 0$ are used) and the equality
	\begin{align*}
	    \int d_{1,1}(\boldsymbol{\Phi})\phi'\phi'\chi 
	    +\int d_{1,1}(\boldsymbol{\Phi})\phi'\phi\chi' &
	    +\int d_{1,2}(\boldsymbol{\Phi})\psi'\phi'\chi
	    +\int d_{1,2}(\boldsymbol{\Phi})\psi'\phi\chi' \\
	    -\int h_{1,1}(\boldsymbol{\Phi})\phi'\phi\chi
	    & -\int h_{1,2}(\boldsymbol{\Phi})\psi'\phi\chi-c\int \phi'\phi\chi \\
	    & =\int \phi^{2}g_{1}(\boldsymbol{\Phi})\chi
	    -k\int\omega(\boldsymbol{\Phi})\phi\chi 
	\end{align*}
	imply
	\begin{align*}
	    \int_{-R}^R d_{1,1}(\boldsymbol{\Phi})(\phi')^2 & \leq
	    -\int d_{1,1}(\boldsymbol{\Phi})\phi'\phi\chi'
	    -\int d_{1,2}(\boldsymbol{\Phi})\psi'\phi\chi' \\
	    &\quad +\int h_{1,1}(\boldsymbol{\Phi})\phi'\phi\chi
	    +\int h_{1,2}(\boldsymbol{\Phi})\psi'\phi\chi \\
	    &\quad +c\int \phi'\phi\chi
	    +\int \phi^{2}g_{1}(\boldsymbol{\Phi})\chi.
	\end{align*}
	It just remains to verify that each term on the right-hand side is bounded uniformly
	with respect to $k$. The last term is bounded as follows:
	\begin{align*}
	    \left|\int \phi^{2}g_{1}(\boldsymbol{\Phi})\chi\right| &
	    \leq \int\left| \phi^{2}g_{1}(\boldsymbol{\Phi})\chi\right| \\
	    & \leq \|g_{1}\|_{L^\infty([0,1]^2)}\|\phi^2\|_{L^\infty(\mathbb{R})}\|\chi\|_{L^1(\mathbb{R})} \\
	    & = \|g_{1}\|_{L^\infty([0,1]^2)}\|\chi\|_{L^1(\mathbb{R})}.
	\end{align*}
	All the other terms are handled similarly, using the facts that $(c_k)_{k>k^\star}$
	is bounded \ref{ass:bounded_speeds} and that
	$\phi'$ and $\psi'$ both have a unitary $L^1(\mathbb{R})$-norm. For instance,
	\begin{align*}
	    \left|\int d_{1,2}(\boldsymbol{\Phi})\psi'\phi\chi'\right| &
	    \leq \int\left| d_{1,2}(\boldsymbol{\Phi})\psi'\phi\chi'\right| \\
	    & \leq \|d_{1,2}\|_{L^\infty([0,1]^2)}\|\phi\|_{L^\infty(\mathbb{R})}\|\chi'\|_{L^\infty(\mathbb{R})}\|\psi'\|_{L^1(\mathbb{R})} \\
	    & = \|d_{1,2}\|_{L^\infty([0,1]^2)}\|\chi'\|_{L^\infty(\mathbb{R})}.
	\end{align*}
	In the end, using the boundedness from below of $d_{1,1}$, there exists a 
	constant $C>0$ that does not depend on $k$ or $R$ such that 
	\begin{equation*}
	    \int_{-R}^R (\phi')^2 \leq C(\|\chi\|_{W^{1,\infty}(\mathbb{R})}+\|\chi\|_{L^1(\mathbb{R})}).
	\end{equation*}
	Remark that necessarily $\|\chi\|_{W^{1,\infty}}+\|\chi\|_{L^1}$
	depends on $R$. We also point out that further integration by parts are possible but
	do not improve the estimate.
    \item If \eqref{eq:LV_reaction} and $\mathbf{D}$ is the Jacobian matrix of a $\mathscr{C}^2$-diffeomorphism 
	\[
	    \mathbf{d}:[0,+\infty)^2\to[0,+\infty)^2,
	\] 
	then we can multiply the first, respectively second, equation by $d_1(\boldsymbol{\Phi}_k)\chi_R$, respectively
	$d_2(\boldsymbol{\Phi}_k)\chi_R$, and discover similarly an $L^2([-R,R])$ estimate on $(\mathbf{d}(\boldsymbol{\Phi}_k))'$,
	that is on $\boldsymbol{\Phi}_k'$ by invertibility of $\mathbf{d}$. This idea is borrowed
	from \cite{Zhang_Zhou_Liu_2017,Zhou_Zhang_Liu_Lin_2012,Zhou_Zhang_Liu_2012}.
	As such, this applies to the system \eqref{eq:SKT_system}.
\end{enumerate}

A very important remark is required here: similar estimates cannot be obtained for the general evolution parabolic problem
or for the general diffusion--advection elliptic problem, as here we heavily use 
the monotonicity and the $L^\infty$-boundedness of the profiles for the calculations.
More details on the difficult estimates for the evolution problem can be found for instance
in \cite{Dreher_2007,Desvillettes_et_al_2015} and references therein.

Actually, the monotonicity of the profiles and the uniform $L^\infty$ bounds induced by the traveling wave
form, that are heavily used in the above calculations, directly yield some regularity,
compactness and convergence properties. Unfortunately, these are not sufficient to derive the 
limiting problem and \ref{ass:bounded_profiles} seems to be truly required. Still, for possible future reference,
we list some of these properties below.
\begin{enumerate}
    \item Since the wave profiles are all of bounded variation and uniformly bounded, we can directly apply Helly's selection
	theorem: up to extraction, $(\boldsymbol{\Phi}_k)_{k>k^\star}$ converges pointwise and locally in $L^1$ to
	some limit $\boldsymbol{\Phi}_\infty$. This does not depend on the equations.
    \item The families $(-\phi_k'\textup{d}\xi)_{k>k^\star}$ and $(\psi_k'\textup{d}\xi)_{k>k^\star}$ are families of 
	probability measures in $\mathbb{R}$.
	By the Banach--Alaoglu theorem, a bounded family of Radon measures in $\mathbb{R}$ is relatively 
	compact in the weak-$\star$ topology of the set of bounded Radon measures, 
	namely the topology of pointwise convergence on the space of continuous functions in $\mathbb{R}$ that converge 
	to $0$ at $\pm\infty$. Again, this does not depend on the equations.
    \item A monotone function is differentiable almost everywhere, with a number of discontinuities at most countable,
	and each discontinuity is a jump discontinuity. 
\end{enumerate}

\subsection{On the results}
\subsubsection{The limiting equation}

The limiting equation in \eqref{eq:singular_limit} does not have classical solutions (namely, solutions of class $\mathscr{C}^2$) 
in general. In fact, the unique weak solution $z$ is of class $\mathscr{C}^1$ if and only if the left-sided and right-sided 
limits of $d$ at $0$ coincide (that is $d_{1,1}(0,0)=d_{2,2}(0,0)$)
and of class $\mathscr{C}^2$ if and only if it is of class $\mathscr{C}^1$ and the left-sided and right-sided limits of $h$ at $0$ coincide
(that is $h_{1,1}(0,0)=h_{2,2}(0,0)$). Nevertheless, since the equation implies that 
$d(z)z'\in W^{1,\infty}(\mathbb{R})$, $d(z)z'$ is at least Lipschitz-continuous. This is where the free boundary relation comes from.

As they are defined in \eqref{def:d} and \eqref{def:h}, the functions $d$ and $h$ both have 
a zero at $z=0$.
However, Theorem \ref{thm:main} does not depend on how the piecewise-continuous functions $d$ and $h$ are defined at $z=0$. 
This is classical in such problems (\textit{e.g.}, \cite{Dancer_Hilhors,Crooks_Dancer__1,Girardin_Nadin_2015}).

Note that the positivity of the left-sided and right-sided limits of $d$ at $0$, that is the positivity of
the essential infimum of $d$, still matters.

\subsubsection{The convergence}
The space where the convergence of the profiles occurs might be improved by bootstrap in 
special cases, using more deeply the structure of the equations. 
We point out that, in view of the regularity of the limit, the best that can be proved is a uniform Lipschitz
bound (implying convergence in all H\"{o}lder spaces $\mathscr{C}^{0,\gamma}$) for $(\boldsymbol{\Phi}_k)_k$ and 
$(\mathbf{D}(\boldsymbol{\Phi}_k)\boldsymbol{\Phi}_k')_k$. This is consistent with the literature on the semilinear problem
(refer to Soave--Zilio \cite{Soave_Zilio_2015} for a detailed review).

\subsubsection{The sign of the effective wave speed when the self-advection is constant}

The formula \eqref{eq:sign_speed}
is the extension of the ``Disunity is strength''-type result for the semilinear competition--diffusion system \eqref{eq:LV_system}
established in \cite{Girardin_Nadin_2015}.

It shows for instance that, if $h_0=0$, $g_1(\bullet,0)=g_2(0,\bullet)$, $\alpha=1$ and $d_{1,1}(\bullet,0) < d_{2,2}(0,\bullet)$,
so that the two populations differ only in diffusivity and the diffusivities obtained in absence of the other are strictly ordered,
then $c_\infty<0$: again, a strong diffusivity is a competitive advantage.

Quite interestingly, the formula \eqref{eq:sign_speed} does not depend on the cross-diffusion rates $d_{1,2}$ and $d_{2,1}$, on the
cross-advection rates $h_{1,2}$ and $h_{2,1}$ and on the competition rate $\omega$. In fact, it only
depends on the parameter $\alpha$ and on the dynamics of each population in absence of the other: although 
the system is arbitrarily strongly coupled, its spatial segregation limit is mostly decoupled (more 
precisely, it is coupled only at the free boundary).

In order to illustrate more directly our result, below, we apply the formula \eqref{eq:sign_speed} to the specific cases studied 
in the aforementioned earlier literature \cite[etc.]{Potts_Petrovskii,Desvillettes_et_al_2015}. This mostly amounts to
comparing dispersal strategies, but clearly we can also use the formula \eqref{eq:sign_speed} to compare growth strategies
and determine, for instance, the effect of a weak Allee effect.

First, we consider self- and cross-diffusion systems with logistic growth, namely systems
generalizing \eqref{eq:SKT_system}, where
\eqref{eq:LV_reaction} holds, $\mathbf{H}=\mathbf{0}$ and $\mathbf{D}(u,v)$ is the following Jacobian matrix:
\begin{align*}
    \mathbf{D}(u,v) & =\textup{Jac}
    \begin{pmatrix}
	u(d_1+a_{1,1}u^{\beta_{1,1}}+a_{1,2}v^{\beta_{1,2}}) \\
	v(d_2+a_{2,1}u^{\beta_{2,1}}+a_{2,2}v^{\beta_{2,2}})	
    \end{pmatrix} \\
    & =
    \begin{pmatrix}
	d_1+a_{1,1}(\beta_{1,1}+1)u^{\beta_{1,1}}+a_{1,2}v^{\beta_{1,2}} & a_{1,2}\beta_{1,2}uv^{\beta_{1,2}-1} \\
	a_{2,1}\beta_{2,1}vu^{\beta_{2,1}-1} & d_2+a_{2,2}(\beta_{2,2}+1)v^{\beta_{2,2}}+a_{2,1}u^{\beta_{2,1}} 
    \end{pmatrix}.
\end{align*}

In such a case, the formula \eqref{eq:sign_speed} reads
\begin{equation*}
    \operatorname{sign}\left(c_\infty\right)=\operatorname{sign}\left(\alpha^2\left(\frac{d_{1}}{6}+\frac{a_{1,1}(\beta_{1,1}+1)}{\beta_{1,1}^2+5\beta_{1,1}+6}\right)-r\left( \frac{d_{2}}{6}+\frac{a_{2,2}(\beta_{2,2}+1)}{\beta_{2,2}^2+5\beta_{2,2}+6} \right)\right).
\end{equation*}

In the special case $\beta_{1,1}=\beta_{2,2}=1$, this reads
\begin{equation*}
    \operatorname{sign}\left(c_\infty\right)=\operatorname{sign}\left(\alpha^2\left(d_{1}+a_{1,1}\right)-r\left( d_{2}+a_{2,2} \right)\right).
\end{equation*}

Hence the sign of the wave speed is determined by the self-diffusivities in absence of the other and at carrying capacity. This
illustrates quite interestingly the fact that the coupling at the free boundary is not just local but takes also into account what
happens far away from the free boundary. The above formula also shows that there are cases where a population wins despite
a smaller linear diffusivity (say, for instance, $\alpha=r=1$, $d_1+a_{1,1}>d_2+a_{2,2}$ and $d_1<d_2$).

Note that we can apply formally the formula to the case $d_1=0$ or $d_2=0$ (no linear diffusion, just porous-medium type self-diffusion). 
Although our proof does not apply in such a case (see \ref{ass:ellipticity}), we conjecture that this formula is indeed the correct one.

Second, we consider cross-taxis systems with logistic growth, namely systems of the form 
\eqref{eq:PP_system}, where \eqref{eq:LV_reaction} holds, $\mathbf{H}=\mathbf{0}$ and
\begin{equation*}
    \mathbf{D}(u,v)=
    \begin{pmatrix}
	d_1 & -\gamma_1 u \\
	-\gamma_2 v & d_2
    \end{pmatrix}.
\end{equation*}

It turns out that for such a system, we recover exactly the same formula as in the case of constant $\mathbf{D}$:
the sign of $c_\infty$ is that of $\alpha^2 d_1-rd_2$. The values of $\gamma_1$ and $\gamma_2$ have no effect. Hence
``Disunity is strength'' again, whatever the strength or the sense (attraction or repulsion) of the cross-taxis.

This conclusion is in sharp contrast with the ``Fortune favors the bold'' conclusion suggested numerically by Potts and Petrovskii in 
\cite{Potts_Petrovskii}. The contrast is not contradictory and is explained by the fact that their simulations 
take finite values of $k$ whereas we focus on the limit $k\to+\infty$, where the effect of the aggressive taxis
becomes negligible due to spatial segregation.

Let us end this discussion by relating this work to the wide literature on mathematical models for
the evolution of dispersal in nature (we refer for instance to \cite[Chapter
11]{Cantrell_Cosner_Ruan_2009} and references therein).  Adopting such a viewpoint, our result tends
to show that, for a species trying to survive a strong competitor, the formation by means of
mutations of an aggressive phenotype -- in the cross-taxis sense of Potts and Petrovskii --, or more
generally of a phenotype whose dispersal strategy accounts for the competitor, should actually not
help, and therefore such a mutant should not be selected. This might seem counter-intuitive at
first, yet it can be explained heuristically at the individual scale: the infinite competition limit
means that as soon as two individuals of adversary populations meet, both die, instantaneously.
Since no one will ever survive such an encounter, even for the tiniest amount of time, it is
therefore useless to try to learn how to deal with these encounters. As a matter of fact, according
to the model, the fittest mutations in such a situation are those that improve the diffusion of the
species, be it linear diffusion or nonlinear self-diffusion. This is of course in sharp contrast
with the well-known ``Unity is strength''-type result in spatially heterogeneous environments
without strong competitor proved by Dockery \textit{et al.} \cite{Dockery_1998} and this might lead
to evolutionary traps, as explained by the first author in \cite[Section 3.5]{Girardin_2019}.

\appendix
\section{The fully nonlinear bistable scalar equation with piecewise-continuous diffusion and advection rates}\label{app}
In this appendix, we sketch briefly the proof of the following theorem that confirms that the functions $d$ and $h$
can be defined arbitrarily at $0$. This theorem extends the main result of Malaguti--Marcelli--Matucci 
\cite{Malaguti_Marcelli_Matucci_2004}.

\begin{thm}\label{thm:app}
    Let $U_\pm\in\mathbb{R}$ such that $U_-<0<U_+$ and let
    \begin{enumerate}
	\item $d:[U_-,U_+]\to\mathbb{R}$ be continuously differentiable and positive in $[U_-,U_+]\backslash\left\{ 0 \right\}$ with positive limits at $0^\pm$;
	\item $h:[U_-,U_+]\to\mathbb{R}$ be continuous in $[U_-,U_+]\backslash\left\{ 0 \right\}$ with limits at $0^\pm$;
	\item $g:[U_-,U_+]\to\mathbb{R}$ be continuous with $g(U_-)=g(0)=g(U_+)=0$ and $g<0$ in $(U_-,0)$ and $g>0$ in $(0,U_+)$.
    \end{enumerate}

    Let $c_\star^\pm$ be the minimal wave speed of nonincreasing classical solutions $(z,c)$ of 
    \begin{equation}
	-(d(z)z')'-(c+h(z))z'=g(z)
	\label{eq:app}
    \end{equation}
    with limits $0$ at $+\infty$ and $U_\pm$ at $-\infty$. 

    Then, if $-c_\star^-<c_\star^+$, the equation \eqref{eq:app} supplemented with the constraints
    \begin{enumerate}
	\item $z$ admits $U_+$ and $U_-$ as limits at $-\infty$ and $+\infty$ respectively;
	\item $z$ is nonincreasing;
	\item $z$ is continuous;
    \end{enumerate}
    admits a unique (up to shifts) weak solution $(z,c)$, and its wave speed $c$ satisfies $-c_\star^-<c<c_\star^+$.

    Conversely, if $-c_\star^-\geq c_\star^+$, no such solution exists.
\end{thm}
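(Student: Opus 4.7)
My plan is to regularize the piecewise-continuous coefficients $d$ and $h$ across their common jump at $0$ and transfer the conclusions of the continuous-coefficient bistable wave theorem of Malaguti, Marcelli and Matucci \cite{Malaguti_Marcelli_Matucci_2004} through a compactness argument. For each small $\varepsilon>0$, I would choose $d_\varepsilon, h_\varepsilon \in \mathscr{C}([U_-,U_+])$ coinciding with $d,h$ outside $(-\varepsilon,\varepsilon)$ and interpolating continuously on $(-\varepsilon,\varepsilon)$, with $d_\varepsilon$ bounded below uniformly in $\varepsilon$ by a positive constant. By the continuous dependence of the monostable minimal wave speeds on the coefficients \cite{Malaguti_Marcelli_Matucci_2010}, the corresponding speeds $c_{\star,\varepsilon}^\pm$ converge to $c_\star^\pm$ as $\varepsilon\to 0$; hence the hypothesis $-c_\star^-<c_\star^+$ yields $-c_{\star,\varepsilon}^-<c_{\star,\varepsilon}^+$ for all sufficiently small $\varepsilon$.

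\textbf{Approximate problem, compactness, and passage to the limit.} The theorem of \cite{Malaguti_Marcelli_Matucci_2004} applied to the regularized problem produces, for each such $\varepsilon$, a unique-up-to-shifts nonincreasing classical wave $(z_\varepsilon, c_\varepsilon)$ with $z_\varepsilon(-\infty)=U_+$, $z_\varepsilon(+\infty)=U_-$ and $-c_{\star,\varepsilon}^- < c_\varepsilon < c_{\star,\varepsilon}^+$. Fix the shift by $z_\varepsilon(0)=0$. Integrating the equation shows that $(d_\varepsilon(z_\varepsilon)z_\varepsilon')'$ is uniformly bounded in $L^\infty(\mathbb{R})$, hence $d_\varepsilon(z_\varepsilon)z_\varepsilon'$ is uniformly Lipschitz; together with the uniform lower bound on $d_\varepsilon$, this yields uniform Lipschitz control of $z_\varepsilon$. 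A diagonal extraction combined with local $\mathscr{C}^2$ elliptic estimates on $\{|z_\varepsilon|\geq\delta\}$ then furnishes a limit $(z,c)$ such that $z\in W^{1,\infty}(\mathbb{R})$ is nonincreasing with $z(0)=0$, $d(z)z'\in W^{1,\infty}(\mathbb{R})$, $z\in\mathscr{C}^2(\mathbb{R}\setminus z^{-1}(0))$, and $(z,c)$ satisfies the weak equation. The only delicate point is passing to the limit in the $d_\varepsilon(z_\varepsilon)z_\varepsilon'$ term, but it is handled exactly as in Section 2 by localizing onto $\{z>0\}$, $\{z=0\}$, $\{z<0\}$ (the $z$-preimage of $\{0\}$ being either empty, a point, or an interval on which $z'=0$).

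\textbf{Identification of the limit, uniqueness, and converse.} That $z(\pm\infty)=U_\pm$ and $-c_\star^-<c<c_\star^+$ follow from arguments essentially identical to those carried out in Section 2 for the segregation limit: the monotone limits at $\pm\infty$ are zeros of $g$ and cannot equal $0$, for if $z(-\infty)=0$ (resp. $z(+\infty)=0$) then $z$ restricted to a half-line would be a classical monostable wave of speed $c$ connecting $0$ and $U_-$ (resp. $U_+$), forcing $c\leq -c_\star^-$ or $c\geq c_\star^+$ in contradiction with the passage to the limit of the strict bounds on $c_\varepsilon$; the same kind of comparison rules out equality. For uniqueness up to shifts, I would normalize both candidates by $z_i(0)=0$, apply a classical sliding argument on each half-line separately (where the equations have continuous coefficients and the uniqueness theory of \cite{Malaguti_Marcelli_Matucci_2004} directly applies), and use the free-boundary matching $d(0^-)\lim_{0^-}z'=d(0^+)\lim_{0^+}z'$, which is automatic from the $W^{1,\infty}$ regularity of $d(z)z'$, to glue the two local uniqueness statements into global uniqueness (and to force the speeds to coincide). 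The converse follows by contraposition: a weak solution of the limit problem when $-c_\star^-\geq c_\star^+$ would, via smoothing and compactness, produce a wave of the $\varepsilon$-problem for arbitrarily small $\varepsilon$, contradicting the nonexistence part of \cite{Malaguti_Marcelli_Matucci_2004} for the regularized coefficients, since $-c_{\star,\varepsilon}^-\geq c_{\star,\varepsilon}^+$ for small $\varepsilon$.

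\textbf{Main obstacle.} The principal difficulty will be ensuring that the limit profile does not degenerate into a propagating terrace -- with $z$ reaching $0$ at $-\infty$ or $+\infty$ rather than $U_\pm$ -- and that the strict speed bounds survive the limit. The jump of $d$ at $0$ precludes any direct use of the phase-plane machinery of \cite{Malaguti_Marcelli_Matucci_2004} at the free boundary, so the matching relation must be inferred purely from the weak formulation, and the exclusion of terrace formation and of equality in the speed bounds must rely on comparison with the underlying monostable waves rather than on a phase-plane inspection of the bistable orbit.
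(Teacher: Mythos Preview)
Your existence and uniqueness scheme is essentially the paper's own: regularize $d,h$, apply the continuous-coefficient bistable theorem of \cite{Malaguti_Marcelli_Matucci_2004}, use the continuity of monostable minimal speeds \cite{Malaguti_Marcelli_Matucci_2010}, and pass to the limit with compactness; for uniqueness, reduce to the semi-wave theory on each half-line and glue via the continuity of $d(z)z'$. Two technical points deserve care. First, your normalization $z_\varepsilon(0)=0$ does not by itself exclude the trivial limit $z\equiv 0$, nor the semi-wave limits $z(-\infty)=U_+$, $z(+\infty)=0$ (or the symmetric case): passing $-c_{\star,\varepsilon}^-<c_\varepsilon<c_{\star,\varepsilon}^+$ to the limit yields only $-c_\star^-\le c\le c_\star^+$, so the contradiction you invoke is not strict. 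The paper avoids this by normalizing at $z_\varepsilon(0)=\tfrac12 U_\pm$, the sign chosen according to whether the limiting speed lies below or above $\tfrac12(c_\star^+-c_\star^-)$; this threshold guarantees a genuinely strict inequality $c<c_\star^+$ (or $c>-c_\star^-$) for the relevant monostable comparison, which is what rules out degeneration. Second, the paper obtains compactness from a local $L^2$ estimate on $z_\varepsilon'$ (the energy argument of Section~\ref{sec:bounded_profiles}), not from an $L^\infty$ bound; your claimed uniform $L^\infty$ bound on $(d_\varepsilon(z_\varepsilon)z_\varepsilon')'$ is circular as stated, since the right-hand side of the equation contains $z_\varepsilon'$.

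The genuine gap is the converse. Compactness produces limits of sequences of solutions; it does not manufacture a solution of the $\varepsilon$-problem out of a weak solution of the discontinuous-coefficient problem, so ``via smoothing and compactness'' does not run in the direction you need. Moreover, $-c_\star^-\ge c_\star^+$ does not imply $-c_{\star,\varepsilon}^-\ge c_{\star,\varepsilon}^+$ for small $\varepsilon$ (think of the equality case), so the nonexistence statement of \cite{Malaguti_Marcelli_Matucci_2004} for the regularized problem is not available. The paper argues the converse directly at the discontinuous level: any putative weak bistable wave decomposes into a positive semi-wave on $(-\infty,0)$ and a negative semi-wave on $(0,+\infty)$, each with continuous coefficients; the semi-wave comparison \cite[Lemma~10]{Malaguti_Marcelli_Matucci_2004} then forces $-c_\star^-<c<c_\star^+$, which is empty when $-c_\star^-\ge c_\star^+$. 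The same semi-wave lemma is what yields the strict speed bounds in the existence case, rather than a vague ``comparison rules out equality''.
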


\begin{proof}
    The existence of the wave, its monotonicity, its continuity and its limits when $-c_\star^-<c_\star^+$ 
    directly follow from a standard regularization of the functions $d$ and $h$, from a locally uniform $L^2$
    estimate on the regularized profile derivative (see \cite{Girardin_Nadin_2015} or Section \ref{sec:bounded_profiles}
    above) and from the continuity with
    respect to $d$ and $h$ of the minimal wave speed for monostable equations \cite{Malaguti_Marcelli_Matucci_2010}. 
    When passing to the limit,
    the nontriviality of the limit $z$ has to be verified, as in the main proof of the paper. In order to do so, a 
    normalization $z(0)=\frac12 U_\pm$ depending on the limiting wave speed can be used, as in the main proof of this paper. 
    Note that at this point, we have $-c_\star^-\leq c\leq c_\star^+$ but not the strict inequalities.

    In the sequel, we will need the continuity of $d(z)z'$. Although it is not assumed \textit{a priori}, 
    it can be established by integrations by parts, exactly as in the main proof of this paper. In fact,
    our proof actually shows that any monotonic continuous traveling wave weak solution necessarily satisfies $d(z)z'\in \mathscr{C}(\mathbb{R})$.

    The uniqueness (up to shifts) can be established by contradiction, assuming that there are two different nonincreasing
    continuous waves $(z_1,c_1)$ and $(z_2,c_2)$ with, for instance, $c_1\leq c_2$. Then we have two
    different positive semi-waves $(z_1^+,c_1)$ and $(z_2^+,c_2)$ or two different negative semi-waves $(-z_1^-,c_1)$ and
    $(-z_2^-,c_2)$. The positive semi-waves are solutions of
    \begin{equation*}
	\begin{cases}
	    -(d(z)z')'-(c+h(z))z'=g(z)\quad\text{in }(-\infty,0) \\
	    z(0)=0 \\
	    z>0\quad\text{in }(-\infty,0) \\
	    \lim_{-\infty}z=U_+
	\end{cases}
    \end{equation*}
    and the negative semi-waves are solutions of
    \begin{equation*}
	\begin{cases}
	    -(d(z)z')'-(c+h(z))z'=g(z)\quad\text{in }(0,+\infty) \\
	    z(0)=0 \\
	    z<0\quad\text{in }(0,+\infty) \\
	    \lim_{+\infty}z=U_-
	\end{cases}
    \end{equation*}
    For both sub-problems, the functions $d$ and $h$ are continuous, so that the uniqueness result 
    for semi-waves \cite[Lemma 10]{Malaguti_Marcelli_Matucci_2004} implies directly that $c_1<c_2$, and then
    the comparison principle \cite[Lemma 10]{Malaguti_Marcelli_Matucci_2004} implies 
    \begin{equation*}
	0>\lim_{z\to 0^+}d(z)\lim_{\xi\to 0^-}z_1'(\xi)>\lim_{z\to 0^+}d(z)\lim_{\xi\to 0^-}z_2'(\xi),
    \end{equation*}
    \begin{equation*}
	0>\lim_{z\to 0^-}d(z)\lim_{\xi\to 0^+}z_2'(\xi)>\lim_{z\to 0^-}d(z)\lim_{\xi\to 0^+}z_1'(\xi),
    \end{equation*}
    so that the continuity of either $d(z_1)z_1'$ or $d(z_2)z_2'$ is contradicted:
    \begin{equation*}
	\frac{\lim_{z\to 0^+}d(z)}{\lim_{z\to 0^-}d(z)}=\frac{\lim_{\xi\to 0^+}z_1'(\xi)}{\lim_{\xi\to 0^-}z_1'(\xi)}
	>\frac{\lim_{\xi\to 0^+}z_2'(\xi)}{\lim_{\xi\to 0^-}z_2'(\xi)}=\frac{\lim_{z\to 0^+}d(z)}{\lim_{z\to 0^-}d(z)}.
    \end{equation*}

    The strict inequalities $-c_\star^-<c<c_\star^+$ are obtained with the same argument (the uniqueness result
    \cite[Lemma 10]{Malaguti_Marcelli_Matucci_2004} does not distinguish semi-waves and regular traveling waves, so that the
    two cannot coexist).

    The nonexistence when $-c_\star^-\geq c_\star^+$ follows similarly from \cite[Lemma 10]{Malaguti_Marcelli_Matucci_2004}.
\end{proof}

\bibliographystyle{plain}
\bibliography{ref.bib}

\begin{thebibliography}{10}

\bibitem{Bao_Zhou_2014}
Lianzhang Bao and Zhengfang Zhou.
\newblock Traveling wave in backward and forward parabolic equations from
  population dynamics.
\newblock {\em Discrete Contin. Dyn. Syst. Ser. B}, 19(6):1507--1522, 2014.

\bibitem{Cantrell_Cosner_Ruan_2009}
Stephen Cantrell, Chris Cosner, and Shigui Ruan.
\newblock {\em Spatial ecology}.
\newblock CRC Press, 2009.

\bibitem{Conti_Terracin}
Monica Conti, Susanna Terracini, and Gianmaria Verzini.
\newblock Asymptotic estimates for the spatial segregation of competitive
  systems.
\newblock {\em Adv. Math.}, 195(2):524--560, 2005.

\bibitem{Crooks_Dancer_}
Elaine C.~M. Crooks, Edward~N. Dancer, and Danielle Hilhorst.
\newblock On long-time dynamics for competition-diffusion systems with
  inhomogeneous {D}irichlet boundary conditions.
\newblock {\em Topol. Methods Nonlinear Anal.}, 30(1):1--36, 2007.

\bibitem{Crooks_Dancer__1}
Elaine C.~M. Crooks, Edward~N. Dancer, Danielle Hilhorst, Masayasu Mimura, and
  Hirokazu Ninomiya.
\newblock Spatial segregation limit of a competition-diffusion system with
  {D}irichlet boundary conditions.
\newblock {\em Nonlinear Anal. Real World Appl.}, 5(4):645--665, 2004.

\bibitem{Dancer_Du_1994}
Edward~N. Dancer and Yi~H. Du.
\newblock Competing species equations with diffusion, large interactions, and
  jumping nonlinearities.
\newblock {\em J. Differential Equations}, 114(2):434--475, 1994.

\bibitem{Dancer_Hilhors}
Edward~N. Dancer, Danielle Hilhorst, Masayasu Mimura, and Lambertus~A.
  Peletier.
\newblock Spatial segregation limit of a competition-diffusion system.
\newblock {\em European J. Appl. Math.}, 10(2):97--115, 1999.

\bibitem{Dancer_2010}
Edward~N. Dancer, Kelei Wang, and Zhitao Zhang.
\newblock Dynamics of strongly competing systems with many species.
\newblock {\em Trans. Amer. Math. Soc.}, 364(2):961--1005, 2012.

\bibitem{Desvillettes_et_al_2015}
L.~Desvillettes, T.~Lepoutre, A.~Moussa, and A.~Trescases.
\newblock On the entropic structure of reaction-cross diffusion systems.
\newblock {\em Comm. Partial Differential Equations}, 40(9):1705--1747, 2015.

\bibitem{Dockery_1998}
Jack Dockery, Vivian Hutson, Konstantin Mischaikow, and Mark Pernarowski.
\newblock The evolution of slow dispersal rates: a reaction diffusion model.
\newblock {\em J. Math. Biol.}, 37(1):61--83, 1998.

\bibitem{Dreher_2007}
Michael Dreher.
\newblock Analysis of a population model with strong cross-diffusion in
  unbounded domains.
\newblock {\em Proc. Roy. Soc. Edinburgh Sect. A}, 138(4):769--786, 2008.

\bibitem{Ducrot_Giletti_Matano}
Arnaud Ducrot, Thomas Giletti, and Hiroshi Matano.
\newblock Existence and convergence to a propagating terrace in one-dimensional
  reaction-diffusion equations.
\newblock {\em Trans. Amer. Math. Soc.}, 366(10):5541--5566, 2014.

\bibitem{Fang_Zhao_2011}
Jian Fang and Xiao-Qiang Zhao.
\newblock Bistable traveling waves for monotone semiflows with applications.
\newblock {\em J. Eur. Math. Soc. (JEMS)}, 17(9):2243--2288, 2015.

\bibitem{Ferracuti_Marc}
Laura Ferracuti, Cristina Marcelli, and Francesca Papalini.
\newblock Travelling waves in some reaction-diffusion-aggregation models.
\newblock {\em Adv. Dyn. Syst. Appl.}, 4(1):19--33, 2009.

\bibitem{Fife_McLeod_19}
Paul~C. Fife and J.~B. McLeod.
\newblock The approach of solutions of nonlinear diffusion equations to
  travelling front solutions.
\newblock {\em Archive for Rational Mechanics and Analysis}, 65(4):335{--}361,
  1977.

\bibitem{Gardner_1982}
Robert~A. Gardner.
\newblock Existence and stability of travelling wave solutions of competition
  models: a degree theoretic approach.
\newblock {\em J. Differential Equations}, 44(3):343--364, 1982.

\bibitem{Gilbarg_Trudin}
David Gilbarg and Neil~S. Trudinger.
\newblock {\em Elliptic Partial Differential Equations of Second Order}.
\newblock Classics in Mathematics. Springer-Verlag, 2001.

\bibitem{Girardin_2019}
L\'{e}o Girardin.
\newblock The effect of random dispersal on competitive exclusion -- {A}
  review.
\newblock {\em Math. Biosci.}, 318:108271, 2019.

\bibitem{Girardin_Nadin_2015}
L\'{e}o Girardin and Gr\'{e}goire Nadin.
\newblock Travelling waves for diffusive and strongly competitive systems:
  relative motility and invasion speed.
\newblock {\em European J. Appl. Math.}, 26(4):521--534, 2015.

\bibitem{Girardin_Nadin_2016}
L\'{e}o Girardin and Gr\'{e}goire Nadin.
\newblock Competition in periodic media: {II} -- {S}egregative limit of
  pulsating fronts and {{``}Unity is not Strength{''}}-type result.
\newblock {\em Journal of Differential Equations}, 265(1):98--156, 2018.

\bibitem{Girardin_Zilio}
L{\'e}o Girardin and Alessandro Zilio.
\newblock Competition in periodic media: {III--Existence} and stability of
  segregated periodic coexistence states.
\newblock {\em Journal of Dynamics and Differential Equations}, 32(1):257--279,
  2020.

\bibitem{Kan_on_1995}
Yukio Kan-on.
\newblock Parameter dependence of propagation speed of travelling waves for
  competition-diffusion equations.
\newblock {\em SIAM J. Math. Anal.}, 26(2):340--363, 1995.

\bibitem{Krause_Van_Gorder_2020}
Andrew~L. Krause and Robert~A. Van~Gorder.
\newblock A non-local cross-diffusion model of population dynamics {II}: Exact,
  approximate, and numerical traveling waves in single- and multi-species
  populations.
\newblock {\em Bulletin of Mathematical Biology}, 82(8):113, Aug 2020.

\bibitem{Kubo_Tello_2015}
Akisato Kubo and J.~Ignacio Tello.
\newblock Mathematical analysis of a model of chemotaxis with competition
  terms.
\newblock {\em Differential Integral Equations}, 29(5-6):441--454, 2016.

\bibitem{Kuzmin_Ruggerini_2011}
Mikhail Kuzmin and Stefano Ruggerini.
\newblock Front propagation in diffusion-aggregation models with bi-stable
  reaction.
\newblock {\em Discrete Contin. Dyn. Syst. Ser. B}, 16(3):819--833, 2011.

\bibitem{Maini_Malaguti}
Philip~K. Maini, Luisa Malaguti, Cristina Marcelli, and Serena Matucci.
\newblock Diffusion-aggregation processes with mono-stable reaction terms.
\newblock {\em Discrete Contin. Dyn. Syst. Ser. B}, 6(5):1175--1189, 2006.

\bibitem{Malaguti_Marcelli_2002}
Luisa Malaguti and Cristina Marcelli.
\newblock Travelling wavefronts in reaction-diffusion equations with convection
  effects and non-regular terms.
\newblock {\em Math. Nachr.}, 242:148--164, 2002.

\bibitem{Malaguti_Marcelli_Matucci_2004}
Luisa Malaguti, Cristina Marcelli, and Serena Matucci.
\newblock Front propagation in bistable reaction-diffusion-advection equations.
\newblock {\em Adv. Differential Equations}, 9(9-10):1143--1166, 2004.

\bibitem{Malaguti_Marcelli_Matucci_2010}
Luisa Malaguti, Cristina Marcelli, and Serena Matucci.
\newblock Continuous dependence in front propagation of convective
  reaction-diffusion equations.
\newblock {\em Commun. Pure Appl. Anal.}, 9(4):1083--1098, 2010.

\bibitem{Marcelli_Papalini_2018}
Cristina Marcelli and Francesca Papalini.
\newblock A new estimate of the minimal wave speed for travelling fronts in
  reaction-diffusion-convection equations.
\newblock {\em Electron. J. Qual. Theory Differ. Equ.}, pages Paper No. 10, 13,
  2018.

\bibitem{Potts_Petrovskii}
Jonathan~R. Potts and Sergei~V. Petrovskii.
\newblock Fortune favours the brave: Movement responses shape demographic
  dynamics in strongly competing populations.
\newblock {\em Journal of Theoretical Biology}, 420:190--199, 2017.

\bibitem{Sanchez-Garduno_Maini_1994}
Faustino S\'{a}nchez-Gardu\~{n}o and Philip~K. Maini.
\newblock Existence and uniqueness of a sharp travelling wave in degenerate
  non-linear diffusion {F}isher-{KPP} equations.
\newblock {\em J. Math. Biol.}, 33(2):163--192, 1994.

\bibitem{Sanchez-Garduno_Maini_1995}
Faustino S\'{a}nchez-Gardu\~{n}o and Philip~K. Maini.
\newblock Travelling wave phenomena in some degenerate reaction-diffusion
  equations.
\newblock {\em J. Differential Equations}, 117(2):281--319, 1995.

\bibitem{Sandstede_2002}
Bj\"{o}rn Sandstede.
\newblock Stability of travelling waves.
\newblock In {\em Handbook of dynamical systems, {V}ol. 2}, pages 983--1055.
  North-Holland, Amsterdam, 2002.

\bibitem{SKT_79}
Nanako Shigesada, Kohkichi Kawasaki, and Ei~Teramoto.
\newblock Spatial segregation of interacting species.
\newblock {\em Journal of Theoretical Biology}, 79(1):83--99, 1979.

\bibitem{Soave_Zilio_2015}
Nicola Soave and Alessandro Zilio.
\newblock Uniform bounds for strongly competing systems: the optimal
  {L}ipschitz case.
\newblock {\em Arch. Ration. Mech. Anal.}, 218(2):647--697, 2015.

\bibitem{Volpert_2014}
Vitaly Volpert.
\newblock {\em Elliptic Partial Differential Equations: Volume 2:
  Reaction-Diffusion Equations}, volume 104.
\newblock Springer, 2014.

\bibitem{Gai_Wang_Yan_2}
Qi~Wang, Chunyi Gai, and Jingda Yan.
\newblock Qualitative analysis of a {L}otka-{V}olterra competition system with
  advection.
\newblock {\em Discrete Contin. Dyn. Syst.}, 35(3):1239--1284, 2015.

\bibitem{Wang_Yang_Yu}
Qi~Wang, Jingyue Yang, and Feng Yu.
\newblock Global well-posedness of advective lotka–volterra competition
  systems with nonlinear diffusion.
\newblock {\em Proceedings of the Royal Society of Edinburgh: Section A
  Mathematics}, page 1–27, 2019.

\bibitem{Wang_Zhang_2017}
Qi~Wang and Lu~Zhang.
\newblock On the multi-dimensional advective {L}otka-{V}olterra competition
  systems.
\newblock {\em Nonlinear Anal. Real World Appl.}, 37:329--349, 2017.

\bibitem{Wu_Zhao_2010}
YaPing Wu and Ye~Zhao.
\newblock The existence and stability of traveling waves with transition layers
  for the {S}-{K}-{T} competition model with cross-diffusion.
\newblock {\em Sci. China Math.}, 53(5):1161--1184, 2010.

\bibitem{Zhang_Zhou_Liu_2017}
Shan Zhang, Ling Zhou, and Zu~Han Liu.
\newblock Uniqueness and least energy property for solutions to a strongly
  coupled elliptic system.
\newblock {\em Acta Math. Sin. (Engl. Ser.)}, 33(3):419--438, 2017.

\bibitem{Zhang_2018}
Yuanyuan Zhang.
\newblock Global solutions and uniform boundedness of attractive/repulsive {LV}
  competition systems.
\newblock {\em Adv. Difference Equ.}, pages Paper No. 52, 8, 2018.

\bibitem{Zhou_Zhang_Liu_2012}
Ling Zhou, Shan Zhang, and Zuhan Liu.
\newblock Uniform {H}\"{o}lder bounds for a strongly coupled elliptic system
  with strong competition.
\newblock {\em Nonlinear Anal.}, 75(16):6120--6129, 2012.

\bibitem{Zhou_Zhang_Liu_Lin_2012}
Ling Zhou, Shan Zhang, Zuhan Liu, and Zhigui Lin.
\newblock The spatial behavior of a strongly coupled non-autonomous elliptic
  system.
\newblock {\em Nonlinear Anal.}, 75(6):3099--3106, 2012.

\end{thebibliography}

\end{document}